\newtheorem{Fact}{Fact}[section]
\newtheorem{Theorem}[Fact]{Theorem}
\newtheorem{Definition}[Fact]{Definition}
\newtheorem{Proposition}[Fact]{Proposition}
\newtheorem{Lemma}[Fact]{Lemma}
\newtheorem{Corollary}[Fact]{Corollary}
\newtheorem{Remark}[Fact]{Remark}
\theoremstyle{definition}
\newtheorem*{Acknowledgment}{Acknowledgment}
\title{Some positivity results of the curvature on the group corresponding to the incompressible Euler equation with Coriolis force}
\author{Taito Tauchi\thanks{
Institute of Mathematics for Industry, Kyushu University, Nishi-ku, Fukuoka, 819-0395, Japan, E-mail address: tauchi.taito.342@m.kyushu-u.ac.jp}
\and 
Tsuyoshi Yoneda\thanks{
Graduate School of Mathematical Sciences, University of Tokyo, Komaba 3-8-1 Meguro, 
Tokyo 153-8914, Japan
E-mail address: yoneda@ms.u-tokyo.ac.jp}}
\begin{document}
	\maketitle
	\begin{abstract}
	In this article,
we investigate the geometry of 
a central extension 
$\widehat{\mathcal D}_{\mu}(S^{2})$
of 
the group of volume-preserving diffeomorphisms of the 2-sphere
equipped with the $L^{2}$-metric,
whose geodesics correspond 
solutions of the incompressible 
Euler equation with Coriolis force.
In particular,
we calculate the {\it Misio{\l}ek curvature} of this group.
This value is related to the existence of a conjugate point
and
its positivity 
directly implies the positivity of the sectional curvature.
	\end{abstract}
	{\bf Keywords}: inviscid fluid flow, diffeomorphism group, conjugate point, Coriolis force, curvature, central extension.\\
{\bf MSC2020;} Primary 35Q35; Secondary 35Q31.
	\section{Introduction}
	The incompressible Euler equation on a Riemannian manifold $M$ is given by 
	\begin{eqnarray}
	 \frac{\partial u}{\partial t}+\nabla_{u}u&=&-\operatorname{grad}p,
	 \nonumber\\
	 \operatorname{div}u&=& 0,
	 \label{E-eq}\\
	 u|_{t=0}&=&u_{0}.
	 \nonumber
	\end{eqnarray}
	Its solutions correspond to geodesics on the group ${\mathcal D}_{\mu}(M)$ of volume-preserving diffeomorphisms of $M$ with $L^{2}$-metric $\langle,\rangle$, which was discovered by V. I. Arnol'd \cite{A}.
	In the case of $M={\mathbb T}^{2}$, the flat torus,
	G. Misio{\l}ek calculated the second variation of a geodesic corresponding to a certain stationary solution $X$ of \eqref{E-eq}
	and showed that the existence of a conjugate point along it.
	Moreover, he also
	revealed the importance of the value 
	\begin{eqnarray*}
	MC^{\mathfrak g}_{X,Y}&:=&
		-||[X,Y]||^{2}
	-\langle X,[[X,Y],Y]\rangle
	\end{eqnarray*}
	where $Y \in {\mathfrak g}$ and ${\mathfrak g}$ is the space of divergence-free vector fields.
	Namely, he essentially proved Fact \ref{Mcriterion},
	which states that the $MC^{\mathfrak g}_{X,Y}>0$ ensures the existence of a conjugate point on the group.
	We call this important value $MC^{\mathfrak g}_{X,Y}$
	the {\it Misio{\l}ek-curvature} 
	and want to study when it is positive or negative.
	We note that the existence of conjugate points along a geodesic
	is related to some stability of corresponding solution in this context.
	
	For $s\geq 1$, define a 2-dimensional manifold $M_{s}$ by
	$$
	M_{s}:=\{(x,y,z)\in {\mathbb R}^{3}\mid
	x^{2}+y^{2}=s^{2}(1-z^{2})\}.
	$$
	Note that
	$M_{s}=S^{2}$ if $s=1$.
	In this article,
	we calculate the Misio{\l}ek curvature 
	in the case of the incompressible Euler equation 
	with Coriolis force $az$ $(a>0)$ on $M_{s}$:
	\begin{eqnarray}
	 \frac{\partial u}{\partial t}+\nabla_{u}u&=&az\star(u)-\operatorname{grad}p,
	 \nonumber\\
	 \operatorname{div}u&=& 0,
	 \label{E-eq-C}\\
	 u|_{t=0}&=&u_{0},
	 \nonumber
	\end{eqnarray}
	where $\star$ is the Hodge operator.
	(This equation is sometimes used as the model of flows on the earth,
	see \cite{BA}.)
	In this case,
	solutions correspond to geodesics on the central extension
	$\widehat{\mathcal D}_{\mu}(M_{s})$ of the group of volume-preserving diffeomorphisms of $M_{s}$ (see Section \ref{CE}),
	whose tangent space at the identity is identified with ${\mathfrak g}\oplus {\mathbb R}$.
	Our main result is the Misio{\l}ek curvature $MC^{{\mathfrak g}\oplus {\mathbb R}}$ of this group in the direction to a west-facing zonal flow:
	\begin{Definition}
	We call a vector field $Z$ on $M_{s}$ a {\it zonal flow}
	if $Z$ has the form
	\begin{eqnarray*}
		Z=F(z)(x\partial_{y}-y\partial_{x})
	\end{eqnarray*}
	for some function $F$.
	Moreover, if $F\leq 0$,
	 we call $Z$ a {\it west-facing} zonal flow.
	\end{Definition}
	Note that any zonal flow is a stationary solution of 
	\eqref{E-eq} and \eqref{E-eq-C}.
	Then, our main results are the following:
\begin{Theorem}
\label{Main}
Let $Z$ be a nonzero west-facing zonal flow and $a\in {\mathbb R}_{>0}$.
Then we have
$$
MC^{{\mathfrak g}\oplus{\mathbb R}}_{(Z,a),(Y,b)}> MC^{\mathfrak g}_{Z,Y}
$$
for any $(Y,b)\in{\mathfrak g}\oplus {\mathbb R}$.
\end{Theorem}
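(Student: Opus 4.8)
The plan is to reduce the statement to the Lie-algebraic data of the central extension and then carry out an explicit stream-function computation for zonal $Z$. First I would record the structure on $\mathfrak{g}\oplus\mathbb{R}$: writing the bracket of the central extension as $[(X,\alpha),(Y,\beta)]=([X,Y],\sigma(X,Y))$ with $\sigma$ the defining $2$-cocycle (the $z$-weighted term producing the Coriolis force, from Section \ref{CE}), and the metric as the orthogonal sum $\langle(X,\alpha),(Y,\beta)\rangle=\langle X,Y\rangle+\alpha\beta$. Substituting these into the definition of $MC$ and using that the central direction is annihilated by every bracket, a direct expansion gives
$$
MC^{\mathfrak{g}\oplus\mathbb{R}}_{(Z,a),(Y,b)}=MC^{\mathfrak{g}}_{Z,Y}-\sigma(Z,Y)^{2}-a\,\sigma([Z,Y],Y).
$$
Thus the theorem is equivalent to the single inequality $-\sigma(Z,Y)^{2}-a\,\sigma([Z,Y],Y)>0$, and in particular it suffices to prove $\sigma(Z,Y)=0$ together with $a\,\sigma([Z,Y],Y)<0$.

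Next I would dispatch the first term. Writing $Z=\nabla^{\perp}\psi_{Z}$ with $\psi_{Z}=\Psi(z)$ a function of $z$ alone (this is what ``zonal'' means at the level of stream functions) and $Y=\nabla^{\perp}\psi_{Y}$, the cocycle reads $\sigma(Z,Y)=\int_{M_{s}}z\,\{\psi_{Z},\psi_{Y}\}\,d\mu$. Since $\{\Psi(z),\psi_{Y}\}=\Psi'(z)\{z,\psi_{Y}\}$ and $z\,\Psi'(z)\{z,\psi_{Y}\}=\{\Phi(z),\psi_{Y}\}$ for a primitive $\Phi$ of $t\mapsto t\,\Psi'(t)$, the integrand is an exact Jacobian and integrates to zero over the closed surface $M_{s}$; hence $\sigma(Z,Y)=0$. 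This step uses only the axisymmetry of $Z$ and of the Coriolis potential $z$.

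The heart of the argument is the sign of $\sigma([Z,Y],Y)$. Using $\psi_{[Z,Y]}=\pm\{\psi_{Z},\psi_{Y}\}$ and the cyclic invariance $\int f\{g,h\}\,d\mu=\int g\{h,f\}\,d\mu$, together with $\{\psi_{Z},\psi_{Y}\}=\Psi'(z)\{z,\psi_{Y}\}$, I would reduce the double bracket to
$$
\sigma([Z,Y],Y)=\mp\int_{M_{s}}\Psi'(z)\,\{z,\psi_{Y}\}^{2}\,d\mu,
$$
a weighted integral of a nonnegative quantity. The west-facing hypothesis $F\le 0$ pins down the sign of $\Psi'$ through the relation $F=\Psi'\cdot g$, where $g>0$ encodes how $\nabla^{\perp}z$ is proportional (by a function of $z$ alone) to the rotation field $x\partial_{y}-y\partial_{x}$ on the surface of revolution $M_{s}$. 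Once the Hodge and bracket sign conventions are fixed consistently, this forces $-a\,\sigma([Z,Y],Y)\ge 0$, with strict positivity precisely when $Z\ne 0$ and $Y$ fails to commute with $Z$, i.e.\ when $\{z,\psi_{Y}\}\not\equiv 0$ on the support of $\Psi'$ (the commuting case being the degenerate one in which both sides vanish).

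I expect the main obstacle to be exactly this last sign computation: correctly tracking the several sign conventions (the Hodge star $\star$, the identification $X\mapsto\psi_{X}$, and the relation between $[X,Y]$ and $\{\psi_{X},\psi_{Y}\}$) and the geometry of $M_{s}$ entering through the proportionality factor $g$ between $\nabla^{\perp}z$ and $x\partial_{y}-y\partial_{x}$, so that the hypothesis $F\le 0$ indeed yields the favorable sign of the weight $\Psi'(z)$ in the nonnegative integrand. By contrast, the reduction in the first paragraph and the vanishing of $\sigma(Z,Y)$ are comparatively routine; the positivity genuinely rests on the west-facing condition controlling the sign of $\Psi'$.
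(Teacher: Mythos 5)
Your overall skeleton coincides with the paper's: the expansion
$MC^{\mathfrak{g}\oplus\mathbb{R}}_{(Z,a),(Y,b)}=MC^{\mathfrak{g}}_{Z,Y}-\Omega(Z,Y)^{2}-a\,\Omega([Z,Y],Y)$
is exactly Proposition \ref{CEn=2}, and the two remaining tasks (vanishing of $\Omega(Z,Y)$, and the sign of $\Omega([Z,Y],Y)$) are precisely the content of Lemma \ref{omegaZ} and Corollary \ref{MCMC}. Where you genuinely diverge is in how those two computations are carried out. The paper uses the representation $\Omega(X,Y)=\langle P(B\times X),Y\rangle$, kills the first term by observing that $B\star Z=c_{1}c_{2}F\,\partial_{r}$ is a gradient field (so its divergence-free projection vanishes), and evaluates $\Omega(Y,[Y,Z])$ by a direct coordinate computation: expand $[Y,Z]$ in the $(r,\theta)$ chart, integrate by parts in $\theta$, use $\operatorname{div}Y=0$ to eliminate $\partial_{\theta}Y_{2}$, and apply Stokes in $r$, arriving at $\Omega(Y,[Y,Z])=\int c_{1}^{2}Y_{1}^{2}F\,\dot{c}_{2}\,dr\,d\theta$, manifestly $\leq 0$ for $F\leq 0$. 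You instead run everything through stream functions and Poisson-bracket calculus: $\Omega(Z,Y)=\int z\{\psi_{Z},\psi_{Y}\}\mu$ vanishes because the integrand is an exact Jacobian, and the cyclic identity together with $\{\Psi(z),\psi_{Y}\}=\Psi'(z)\{z,\psi_{Y}\}$ reduces $\Omega([Z,Y],Y)$ to $\mp\int\Psi'(z)\{z,\psi_{Y}\}^{2}\mu$. This is a legitimate and arguably more conceptual route; since $\{z,\psi_{Y}\}$ equals $Y_{1}$ up to the positive factor $\dot{c}_{2}$ and a sign, your reduction and the paper's are literally the same integral.

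The soft spot is that you never actually resolve the sign that the theorem is about. You leave both the $\mp$ in $\sigma([Z,Y],Y)=\mp\int\Psi'\{z,\psi_{Y}\}^{2}\mu$ and the sign of $g$ in $F=\Psi'\cdot g$ open, asserting that ``once the conventions are fixed consistently'' the product comes out favorably. Each of these two signs separately depends on the conventions for $\star$ and for $\psi\mapsto\nabla^{\perp}\psi$; only their product is convention-independent, and that product is exactly the content to be proved. To close the argument you must fix one convention and compute both signs: e.g.\ with the paper's conventions, $Z=F(r)\partial_{\theta}=\star\operatorname{grad}\psi_{Z}$ forces $\partial_{r}\psi_{Z}=-c_{1}F$, hence $\Psi'(z)=-c_{1}F/\dot{c}_{2}\geq 0$ for west-facing $F$, and the accompanying bracket sign then reproduces the paper's formula, so your argument does complete --- but as written the crux is asserted, not proved. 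Separately, note that both your computation and the paper's actually yield only $\geq$ in full generality: if $Y_{1}\equiv 0$ on the support of $F$ (for instance $Y$ itself zonal), the correction term vanishes and equality holds, so the strict inequality claimed in the statement cannot hold for literally every $Y$. You flag this degenerate case explicitly, which is more careful than the paper's own one-line deduction of the theorem from Corollary \ref{MCMC}.
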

Note that the definition of the west-facing zonal flow is just for simplicity.
We can easily generalize it.

\begin{Corollary}
\label{>0}
Suppose $s>1$.
	Let $Z$ be a nonzero west-facing zonal flow 
	whose support is contained in 
	$M_{s}\backslash \{(0,0,\pm 1)\}$
	and $a\in{\mathbb R}_{>0}$.
	Then,
	there exists $Y\in {\mathfrak g}$ satisfying
	 $MC_{(Z,a),(Y,b)}>0$
	 for any $b\in{\mathbb R}$.
\end{Corollary}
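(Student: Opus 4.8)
The plan is to deduce Corollary~\ref{>0} from Theorem~\ref{Main} by exhibiting a single direction $Y\in{\mathfrak g}$ for which the ordinary Misio\l ek curvature $MC^{\mathfrak g}_{Z,Y}$ is already nonnegative (or at least not too negative), and then invoking the strict inequality $MC^{{\mathfrak g}\oplus{\mathbb R}}_{(Z,a),(Y,b)}>MC^{\mathfrak g}_{Z,Y}$ to push the central-extension value strictly above zero. Since the latter inequality holds for \emph{every} $b\in{\mathbb R}$, it suffices to produce one good $Y$; the parameter $b$ then comes along for free. So the whole problem reduces to a careful choice of $Y$ together with a lower bound on $MC^{\mathfrak g}_{Z,Y}=-\|[Z,Y]\|^{2}-\langle Z,[[Z,Y],Y]\rangle$.

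\medskip

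First I would write $Z=F(z)(x\partial_y-y\partial_x)$ with $F\le 0$, $F\not\equiv 0$, and supported away from the poles $(0,0,\pm1)$, so that $Z$ lives on an annular region of $M_s$ where the coordinate $z$ stays in a compact subinterval of $(-1,1)$. On $M_s$ it is natural to use the ``latitude--longitude'' coordinates in which $\partial_\theta=x\partial_y-y\partial_x$ is the rotational (longitudinal) field and $z$ plays the role of the second coordinate; here $Z=F(z)\partial_\theta$. Next I would select $Y$ to be another zonal-type or simple separated field, e.g.\ $Y=G(z)\partial_\theta$ or a field of the form $Y=H(z)\sin(k\theta)\,\partial_z+\cdots$ chosen divergence-free, so that the brackets $[Z,Y]$ and $[[Z,Y],Y]$ can be computed explicitly in these coordinates. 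The point of choosing $Y$ with a definite angular frequency is that the terms $\|[Z,Y]\|^2$ and $\langle Z,[[Z,Y],Y]\rangle$ become one-dimensional integrals in $z$ weighted by the area element of $M_s$, which for $s>1$ carries the extra geometric factor distinguishing $M_s$ from the round sphere.

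\medskip

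The key computation is then to show that for a suitable such $Y$ one has $-\|[Z,Y]\|^2-\langle Z,[[Z,Y],Y]\rangle\ge 0$. I would integrate by parts in $z$ to move derivatives off $Z$ and exploit the sign condition $F\le 0$: the term $-\langle Z,[[Z,Y],Y]\rangle$ should, after integration by parts, produce a factor proportional to $-F$ times a manifestly nonnegative quantity (a squared gradient of the stream function of $Y$), while the negative term $-\|[Z,Y]\|^2$ is controlled by the same integral when $s>1$ because the metric distortion relative to $S^2$ supplies a favorable constant. Concretely, I expect an inequality of the schematic shape
\[
-\|[Z,Y]\|^2-\langle Z,[[Z,Y],Y]\rangle
=\int (-F)\,\bigl(c(s)\,|\partial Y|^2-|\text{lower order}|\bigr)\,d\mu_s,
\]
where $c(s)>0$ precisely when $s>1$, so the integrand is pointwise nonnegative on the support of $Z$. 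Once $MC^{\mathfrak g}_{Z,Y}\ge 0$ is established, Theorem~\ref{Main} gives $MC^{{\mathfrak g}\oplus{\mathbb R}}_{(Z,a),(Y,b)}>MC^{\mathfrak g}_{Z,Y}\ge 0$ for all $b$, which is exactly the claim.

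\medskip

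The main obstacle I anticipate is the explicit bracket and norm computation on $M_s$ and, in particular, isolating the geometric constant $c(s)$ and verifying that the condition $s>1$ (rather than $s=1$) is what makes it positive. On the round sphere $s=1$ one expects the curvature to be borderline (the classical results of Misio\l ek and Lukatskii on $\mathcal D_\mu(S^2)$ suggest $MC^{\mathfrak g}_{Z,Y}$ is not automatically positive there), so the separation $s>1$ must enter through the area element and the relation between $z$ and the true latitude on the elongated spheroid $M_s$. Getting the integration-by-parts boundary terms to vanish—using exactly the hypothesis that $Z$ is supported away from the poles—and matching constants so that $c(s)>0$ is the delicate part; the sign condition $F\le 0$ and the freedom in choosing the angular frequency $k$ of $Y$ are the two levers I would use to close the estimate.
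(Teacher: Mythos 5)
Your reduction is exactly the paper's: produce one $Y\in{\mathfrak g}$ for which the ordinary Misio{\l}ek curvature $MC^{\mathfrak g}_{Z,Y}$ is already nonnegative (the paper: strictly positive), then apply Theorem~\ref{Main}, noting that $b$ plays no role. But the entire difficulty of the corollary lies in producing that $Y$, and this is where your proposal stops being a proof. The paper computes nothing at this point: it invokes Fact~\ref{TTthm}, i.e.\ the earlier result \cite[Thm.~1.2]{TTconj}, which states precisely that for $s>1$ and any zonal flow $Z$ supported away from the poles there exists $Y\in{\mathfrak g}$ with $MC_{Z,Y}>0$. That existence theorem is the content of a separate paper, and its proof is a delicate construction; your sketch of it (``I expect an inequality of the schematic shape\dots'', ``should \dots produce a factor'') is a plan, not an argument. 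You never fix a concrete $Y$, never carry out the bracket and norm computations, and never verify that the hoped-for constant $c(s)$ is positive when $s>1$; nothing in your text rules out that the negative term $-\|[Z,Y]\|^{2}$ dominates. So there is a genuine gap: the key analytic ingredient is asserted, not proved.

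Two concrete problems inside the sketch itself. First, one of your two candidate families is provably useless: if $Y=G(z)\partial_\theta$ is itself zonal, then $[Z,Y]=0$, so $MC^{\mathfrak g}_{Z,Y}=0$; moreover the radial component $Y_{1}$ vanishes, so by Corollary~\ref{MCMC} the Coriolis correction $-a\int c_{1}^{2}Y_{1}^{2}F\sqrt{1-\dot{c}_{1}^{2}}\,dr\,d\theta$ vanishes as well, giving $MC_{(Z,a),(Y,b)}=0$, not $>0$. Any successful $Y$ must have nontrivial angular dependence, as in your second family. Second, if your computation only yields $MC^{\mathfrak g}_{Z,Y}\ge 0$ rather than $>0$, you must rely on strictness of the inequality in Theorem~\ref{Main}, and by Corollary~\ref{MCMC} that strictness holds only when $Y_{1}$ is nonzero somewhere on the support of $F$ --- a condition you would have to build into the choice of $Y$ and check. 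The clean repair is to do what the paper does: quote \cite[Thm.~1.2]{TTconj} (Fact~\ref{TTthm}) to obtain $Y$ with $MC_{Z,Y}>0$, after which Theorem~\ref{Main} (even in non-strict form) finishes the proof.
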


The theorem states that
for any west-facing zonal flow $Z$,
the Misio{\l}ek curvature of $Z$ regarded as a solution of \eqref{E-eq-C} is grater than
the Misio{\l}ek curvature regarded as a solution of \eqref{E-eq}.
We note that the positivity of the Misio{\l}ek curvature directly implies the positivity of the sectional curvature on the corresponding group (see Definition \ref{Appx-MC-def} and Lemma \ref{MC-other} in Appendix).
Thus,
the corollary implies the positivity of the sectional curvature on $\widehat{\mathcal D}_{\mu}(M_{s})$ under some support condition.

Our motivation of this study is the existence of a stable multiple zonal jet flow on Jupiter
whose mechanism is not yet well understood.
See \cite[Section.\:1]{TTconj} and references therein for more explanations and related studies.

\begin{Acknowledgment}
	The authors are very grateful to G. Misio{\l}ek for the very fruitful discussion. 
	Research of TT was partially supported by Grant-in-Aid for JSPS Fellows (20J00101), Japan Society
for the Promotion of Science (JSPS).
Research of TY  was partially supported by Grant-in-Aid for Scientific
Research B (17H02860, 18H01136, 18H01135 and 20H01819), Japan Society
for the Promotion of Science (JSPS).
\end{Acknowledgment}

\section{Misio{\l}ek curvature}
In this section,
we define the Misio{\l}ek curvature and explain its importance.
We refer to \cite{Misiolek-T,TTconj}.

Let $G$ be a (infinite-dimensional) Lie group with right-invariant metric $\langle,\rangle$,
and ${\mathfrak g}$ the Lie algebra of $G$.
Then, we define the Misio{\l}ek curvature $MC_{X,Y}:=MC^{\mathfrak g}_{X,Y}$ by
\begin{eqnarray}
MC_{X,Y}:=-||[X,Y]||^{2}-\langle X,[[X,Y],Y]\rangle.
\label{MC-form-2}
\end{eqnarray}
The first importance of this value is that the positivity of $MC$ directly implies that of the curvature (see Definition \ref{Appx-MC-def} and Lemma \ref{MC-other} in Appendix).
Note that this formula of $MC$ seems to be simpler than the general formula of the curvature on the group with right-invariant metric (see Lemma \ref{curvature-R-inv}).
The second and main importance of $MC$ is Fact \ref{Mcriterion} given below.
In \cite{Misiolek-T},
this fact is proved for the case that $G$ is the group ${\mathcal D}^{s}_{\mu}(T^{2})$ of volume-preserving $H^{s}$-diffeomorphisms of the 2-dimensional flat torus $T^{2}$.
(For the case ${\mathcal D}^{s}_{\mu}(M)$,
where $M$ is a compact $n$-dimensional Riemannian manifold,
see also \cite{TTconj}.)
The essential point of the proof in \cite{Misiolek-T} is the fact 
that the inverse function theorem holds for the 
Riemannian exponential map $\exp:T_{e}{\mathcal D}^{s}_{\mu}(M)
\to {\mathcal D}^{s}_{\mu}(M)$.
Here,
we say that
the inverse function theorem holds for $\exp$
if
$\exp$ is isometry near $X\in T_{e}{\mathcal D}^{s}_{\mu}(M)$
whenever
 the differential of $\exp$ is an isomorphism at $X$.
Thus,
we obtain the following:

\begin{Fact}
	\label{Mcriterion}
	Suppose that there exists the (Riemannian) exponential map $\operatorname{exp}:{\mathfrak g}\to G$ and the inverse function theorem holds for $\exp$. 
	Let 
	$X\in{\mathfrak g}$
	be a stationary solution of the Euler-Arnol'd equation. 
	Suppose that there exists  
	$Y\in{\mathfrak g}$ satisfying $MC_{X,Y}>0$.
Then,
there exists
a point conjugate
to the identity element $e\in G$ along the geodesic corresponding to $X$ on $G$.
\end{Fact}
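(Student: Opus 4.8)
The plan is to detect the conjugate point through the second variation (index form) of the energy functional along the geodesic $\gamma(t)=\exp(tX)$, and to convert its negativity into the existence of a conjugate point by means of the inverse function theorem hypothesis. Concretely, I would first produce a proper variation field for which the Hessian of the energy is strictly negative, and then argue by contraposition: if no point of $\gamma$ were conjugate to $e$, then $d\exp$ would be an isomorphism all along the ray $t\mapsto tX$, so by hypothesis $\exp$ would be a local isometry there and $\gamma$ would be locally energy-minimizing, contradicting the negativity of the Hessian.

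For the computation I would trivialize everything by right translation, writing a variation $\eta(s,t)$ of $\gamma$ through $u=\partial_t\eta\cdot\eta^{-1}$ and $v=\partial_s\eta\cdot\eta^{-1}$ in $\mathfrak g$, related by the Maurer--Cartan identity $\partial_s u-\partial_t v=-[u,v]$. Writing the energy as $E(s)=\tfrac12\int_0^T\|u\|^2\,dt$, its first variation reproduces the Euler--Arnol'd equation $\partial_t u+\operatorname{ad}_u^{*}u=0$, where $\langle\operatorname{ad}_u^{*}u,Z\rangle=\langle u,[u,Z]\rangle$; since $X$ is stationary we have $u(0,t)\equiv X$, $\partial_t u=0$ and $\operatorname{ad}_X^{*}X=0$. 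Differentiating once more and using these relations kills the first-order term, and I would then insert the test field $v(0,t)=\phi(t)Y$ with $\phi(0)=\phi(T)=0$. A direct expansion of the remaining brackets (all cross terms $\int_0^T\phi\phi'\,dt$ vanish by the boundary conditions) should collapse to
\[
E''(0)=\|Y\|^{2}\int_0^{T}\phi'(t)^{2}\,dt-MC_{X,Y}\int_0^{T}\phi(t)^{2}\,dt ,
\]
which is exactly where the Misio{\l}ek curvature enters, because the surviving zeroth-order coefficient is precisely $\|[X,Y]\|^{2}+\langle X,[[X,Y],Y]\rangle=-MC_{X,Y}$. Taking $\phi(t)=\sin(\pi t/T)$ gives $E''(0)=\|Y\|^{2}\pi^{2}/(2T)-MC_{X,Y}\,T/2$, so $MC_{X,Y}>0$ forces $E''(0)<0$ once $T$ is large enough.

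It remains to turn $E''(0)<0$ into an honest conjugate point, and this is the step I expect to be the main obstacle, since in infinite dimensions the Morse index theorem is not available for free. Here I would use the hypothesis exactly as stated: suppose, for contradiction, that no point $\gamma(t)$, $0<t\le T$, is conjugate to $e$, i.e.\ $d\exp_{tX}$ is a linear isomorphism for all such $t$. Then by assumption $\exp$ is a local isometry near each $tX$; covering the compact segment $\{tX:0\le t\le T\}$ and using that a geodesic lying in such a normal neighborhood is locally minimizing, $\gamma|_{[0,T]}$ would minimize energy among nearby curves with the same endpoints, whence $E''(0)\ge0$ for every proper variation field --- contradicting the field constructed above. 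Therefore some $\gamma(t)$ with $0<t\le T$ is conjugate to $e$. The genuinely delicate points, both handled in \cite{Misiolek-T}, are the passage from ``$d\exp$ is an isomorphism along the ray'' to ``$\gamma$ is locally minimizing'' (the infinite-dimensional Gauss-lemma / local-minimization step, which is precisely what the inverse function theorem hypothesis supplies) and the justification that the formal variation $v(0,t)=\phi(t)Y$ is realized by an admissible variation of $\gamma$ in $G$.
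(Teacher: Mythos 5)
Your proposal is correct, and its overall skeleton is exactly the one the paper (following \cite{Misiolek-T}) relies on: build a proper variation field $\phi(t)Y^{R}$ along the geodesic, show its index form equals $||Y||^{2}\int_{0}^{T}\dot{\phi}^{2}\,dt-MC_{X,Y}\int_{0}^{T}\phi^{2}\,dt$, make this negative with a sine cutoff, and then convert negativity of the second variation into a conjugate point by contraposition through the inverse-function-theorem hypothesis --- that last step is precisely Misio{\l}ek's argument, which the paper itself does not reprove but explicitly defers to \cite{Misiolek-T}. The one genuine difference is how the second-variation formula is derived. You work entirely in the right trivialization, using the Maurer--Cartan identity relating $\partial_{s}u$ and $\partial_{t}v$ together with stationarity $\operatorname{ad}_{X}^{*}X=0$, so the algebraic quantity $-||[X,Y]||^{2}-\langle X,[[X,Y],Y]\rangle$ appears directly as the surviving zeroth-order coefficient (and your bookkeeping is right: the cross terms carry constant coefficients and die against $\int_{0}^{T}\phi\dot{\phi}\,dt=0$, and the $\partial_{s}^{2}$-terms are killed by the endpoint conditions and by $\operatorname{ad}_{X}^{*}X=0$). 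The paper instead routes through the right-invariant Levi-Civita connection and curvature tensor: it proves the second-variation formula with coefficient $\langle R(X^{R},Y^{R})Y^{R},X^{R}\rangle-||\nabla_{X^{R}}Y^{R}||^{2}$ (Lemma \ref{Misiolek-calculation}), takes that expression as the \emph{definition} of $MC_{X,Y}$ (Definition \ref{Appx-MC-def}), and only afterwards identifies it with the algebraic formula via Arnol'd's curvature formula (Lemmas \ref{curvature-R-inv} and \ref{MC-other}); its explicit test function $f_{s}$ plays the role of your $\sin(\pi t/T)$. Your route is more elementary and self-contained; what the paper's detour buys is the geometric interpretation that $MC_{X,Y}>0$ forces $\langle R(X^{R},Y^{R})Y^{R},X^{R}\rangle>0$, i.e.\ positivity of the sectional curvature, which is exactly the point the paper invokes when discussing Theorem \ref{Main} and Corollary \ref{>0}. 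The two delicate points you flag at the end (realizing $\phi(t)Y$ by an admissible variation, and the passage from invertibility of $d\exp$ along the ray to local minimization) are indeed the content supplied by the hypothesis and by \cite{Misiolek-T}, so flagging rather than reproving them puts you in the same position as the paper.
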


Fact \ref{Mcriterion} states that the positivity of the Misio{\l}ek curvature ensures that 
the existence of a conjugate point.

\section{Central extension of volume-preserving diffeomorphism group}
\label{CE}
In this section,
we briefly recall about the central extension of the volume-preserving diffeomorphism group
by a Lichnerowicz cocycle.
Our main references are \cite{Mstability,Vizman-GEDG}.

Let $(M,g)$ be a compact $n$-dimensional Riemannian manifold
and ${\mathcal D}_{\mu}(M)$
the group of volume-preserving $C^{\infty}$-diffeomorphisms
with the $L^{2}$-metric:
\begin{eqnarray}
	\langle X,Y\rangle:=
	\int_{M}g(X,Y)\mu,
	\label{L^{2}}
\end{eqnarray}
	where $\mu$ is the volume form.
	We write ${\mathfrak g}$ for the space of divergence-free vector fields on $M$,
	which is identified with the tangent space of ${\mathcal D}_{\mu}(M)$ at the identity element.

For a closed 2-form $\eta$,
we define a Lichnerowicz 2-cocycle $\Omega$ on ${\mathfrak g}$ by
\begin{eqnarray}
	\Omega(X,Y):=
	\int_{M}\eta(X,Y).
\end{eqnarray}
	If $\eta\in H^{2}(M,{\mathbb Z})\subset H^{2}(M,{\mathbb R})$, 
	this cocycle integrates the group ${\mathcal D}_{\mu}^{\rm ex}(M)$ of 
	exact volume preserving diffeomorphism.
	This group coincides with
	 the identity component of
	${\mathcal D}_{\mu}(M)$
	if $H^{n-1}(M,{\mathbb R})=0$.
	Thus, in such a case,
	there exists a central extension $\widehat{\mathcal D}_{\mu}(M)$ of the identity component of ${\mathcal D}_{\mu}(M)$, whose tangent space at the identity is
	${\mathfrak g}\oplus {\mathbb R}$
	and its Lie bracket and inner product are given by
	\begin{eqnarray}
	[(X,a),(Y,b)]&=&
	([X,Y],\Omega(X,Y)),
	\label{CEL}
	\\
	\langle (X,a),(Y,b) \rangle
	&=&
	\langle X,Y\rangle
	+
	ab.
	\label{CEI}
	\end{eqnarray}
	Take a $(n-2)$-form $B$
	satisfying $\eta=\iota_{B}(\mu)$, or equivalently,
	\begin{eqnarray*}
		\Omega(X,Y)
		=
		\int_{M}\eta(X,Y)\mu
		=
		\int_{M}\mu(B,X,Y)
		=
		\int_{M}g(B\times X,Y),
	\end{eqnarray*}
	where $B\times X:=\star(B\wedge X)$.
	Note that this can be rewritten as 
	\begin{eqnarray}
	\label{omega-P}
		\Omega(X,Y)
		=
		\langle P(B\times X), Y\rangle,
	\end{eqnarray}
	where $P:{\mathfrak X}(M)\to {\mathfrak g}$ is the projection to the divergence-free part.
Then, the Euler-Arnol'd equation of $\widehat{\mathcal D}_{\mu}(M)$ is
\begin{eqnarray}
	\frac{\partial u}{\partial t}
	=
	-\nabla_{u}u
	+au\times B
	-
	\operatorname{grad}p.
	\label{A-E-eq-D}
\end{eqnarray}
\begin{Remark}
This formula is slightly different from \cite{Vizman-GEDG}
because the sign convention differs.
In order to clarify this,
we summarize our conventions in Appendix.
\end{Remark} 
	We summarize the contents of this section in $\dim M=2$.
	\begin{Proposition}
	\label{CEn=2}
	Suppose that $\dim M=2$,
	$\eta\in H^{2}(M,{\mathbb Z})$
	and
	$H^{1}(M,{\mathbb R})=0$.
	Then there exists a central extension group $\widehat{\mathcal D}_{\mu}(M)$ of the identity component of ${\mathcal D}_{\mu}(M)$, whose Euler-Arnol'd equation is
		\begin{eqnarray}
	\frac{\partial u}{\partial t}
	=
	-\nabla_{u}u
	+au\star B
	-
	\operatorname{grad}p.
	\label{E-A-2}
\end{eqnarray}	
	Moreover,
	the Misio{\l}ek curvature $MC_{(X,a),(Y,b)}:=MC_{(X,a),(Y,b)}^{{\mathfrak g}\oplus {\mathbb R}}$
	is given by
	\begin{eqnarray}
		MC_{(X,a),(Y,b)}&=&
		-||[X,Y]||^{2}
		-\langle X,[[X,Y],Y]\rangle
		-\Omega(X,Y)^{2}
		-a\Omega([X,Y],Y)
		\nonumber\\
		&=&
		MC_{X,Y}
		-\Omega(X,Y)^{2}
		-a\Omega([X,Y],Y)
		\label{MC-CE}
	\end{eqnarray}
	for $(X,a),(Y,b)\in {\mathfrak g}\oplus {\mathbb R}$.
	\end{Proposition}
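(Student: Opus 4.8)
The plan is to treat the two assertions separately: first the existence of $\widehat{\mathcal D}_{\mu}(M)$ together with its Euler-Arnol'd equation \eqref{E-A-2}, and then the curvature formula \eqref{MC-CE}. The existence is a direct specialization of the general discussion of Section \ref{CE} to $n=2$. Indeed, the condition $H^{n-1}(M,{\mathbb R})=0$ that guarantees ${\mathcal D}_{\mu}^{\rm ex}(M)$ coincides with the identity component of ${\mathcal D}_{\mu}(M)$ becomes exactly the hypothesis $H^{1}(M,{\mathbb R})=0$; combined with $\eta\in H^{2}(M,{\mathbb Z})$, this ensures that the Lichnerowicz cocycle $\Omega$ integrates, producing the central extension $\widehat{\mathcal D}_{\mu}(M)$ with bracket \eqref{CEL} and inner product \eqref{CEI}. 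To deduce \eqref{E-A-2} from the general equation \eqref{A-E-eq-D}, I would note that for $n=2$ the $(n-2)$-form $B$ with $\eta=\iota_{B}\mu$ is a $0$-form, i.e.\ a function. Since $0$-forms commute with $1$-forms in the wedge product and $\star$ is linear over functions, the term $au\times B=a\star(u\wedge B)=a\star(B\wedge u)=aB\star u$ collapses to $au\star B$, yielding \eqref{E-A-2}.

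The curvature formula is the computational core, and it amounts to substituting \eqref{CEL} and \eqref{CEI} into the general definition \eqref{MC-form-2} applied to the Lie algebra ${\mathfrak g}\oplus{\mathbb R}$. I would compute the three ingredients in turn. First, using \eqref{CEL} and \eqref{CEI},
\[
\|[(X,a),(Y,b)]\|^{2}=\|([X,Y],\Omega(X,Y))\|^{2}=\|[X,Y]\|^{2}+\Omega(X,Y)^{2}.
\]
Second, iterating \eqref{CEL} gives $[[(X,a),(Y,b)],(Y,b)]=([[X,Y],Y],\,\Omega([X,Y],Y))$. Third, pairing this with $(X,a)$ through \eqref{CEI} yields
\[
\langle (X,a),[[(X,a),(Y,b)],(Y,b)]\rangle=\langle X,[[X,Y],Y]\rangle+a\,\Omega([X,Y],Y).
\]
Assembling these into $MC=-\|\cdot\|^{2}-\langle\cdot,\cdot\rangle$ reproduces \eqref{MC-CE}, where the grouping $MC_{X,Y}=-\|[X,Y]\|^{2}-\langle X,[[X,Y],Y]\rangle$ recovers the second line.

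The step requiring the most care is this curvature computation, although it is algebraically routine. The one conceptual point worth emphasizing is that the parameter $b$ never appears in the final formula: this is precisely because the extension is \emph{central}, so the ${\mathbb R}$-component of every bracket is a value of $\Omega$ depending only on the ${\mathfrak g}$-parts and not on the central variables $a,b$. The only genuine subtlety, therefore, is not the bookkeeping but the sign conventions: one must ensure that the conventions adopted for $\Omega$, the Hodge operator $\star$, and the cross product $B\times X$ are mutually consistent, so that the force term in \eqref{E-A-2} and the cross-term $-a\,\Omega([X,Y],Y)$ in \eqref{MC-CE} carry the stated signs. This is exactly the issue flagged in the Remark, and I would resolve it by fixing conventions once and for all in the Appendix and checking the specialization $n=2$ against them.
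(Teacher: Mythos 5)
Your proposal is correct and takes essentially the same route as the paper: the existence statement and equation \eqref{E-A-2} are obtained by specializing the construction of Section \ref{CE} to $n=2$ (where $B$ is a function, so $u\times B=u\star B$), and the curvature formula \eqref{MC-CE} follows by substituting \eqref{CEL} and \eqref{CEI} into the definition \eqref{MC-form-2}. Your write-up simply makes explicit the bracket and inner-product computations (including the observation that $b$ drops out by centrality) that the paper's one-line proof leaves to the reader.
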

\begin{proof}
Note that $u\times B=u\star B$ by $\dim M=2$.
	Moreover, the assertion of the Misio{\l}ek curvature follows from the definition, \eqref{CEL} and \eqref{CEI}.
\end{proof}

\section{$M=M_{s}$ case}
In this section,
we apply the results in Section \ref{CE}
to the case $M=M_{s}$.
Recall 
$$
	M_{s}:=\{(x,y,z)\in {\mathbb R}^{3}\mid
	x^{2}+y^{2}=s^{2}(1-z^{2})\}.
	$$
\begin{Proposition}
\label{Existence-D}
Let $B:=z$ and $\eta:=z\mu$.
	Then, there exists a group $\widehat{\mathcal D}_{\mu}(M_{r})$ whose Euler-Arnol'd equation is \eqref{E-A-2}.
\end{Proposition}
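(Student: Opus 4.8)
The plan is to \emph{reduce the statement to Proposition~\ref{CEn=2}} by checking that the surface $M_s$ together with the data $B=z$, $\eta=z\mu$ meets every hypothesis of that proposition, and then to quote it verbatim. Those hypotheses are three: $\dim M=2$; vanishing of $H^1(M,\mathbb{R})$; and integrality of the cohomology class $[\eta]$. In addition one should confirm the compatibility $\eta=\iota_B(\mu)$ that links the cocycle form $\eta$ to the $(n-2)$-form $B$ appearing in the Euler-Arnol'd equation \eqref{E-A-2}.

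First I would record that $M_s$ is a smooth compact surface diffeomorphic to $S^2$. Setting $f(x,y,z)=x^2+y^2+s^2z^2-s^2$, the surface is the level set $f^{-1}(0)$, and since $\nabla f=(2x,2y,2s^2z)$ vanishes only at the origin, which is not on $M_s$, the value $0$ is regular and $M_s$ is an embedded $2$-manifold; as the ellipsoid $x^2/s^2+y^2/s^2+z^2=1$ it is diffeomorphic to $S^2$. In particular $\dim M_s=2$ and $H^1(M_s,\mathbb{R})\cong H^1(S^2,\mathbb{R})=0$, which settles two hypotheses. For the third, the crucial observation is that $[\eta]$ is not merely integral but zero: using the surface-of-revolution parametrization $x=s\sqrt{1-z^2}\cos\theta$, $y=s\sqrt{1-z^2}\sin\theta$ one writes $\mu=h(z)\,d\theta\wedge dz$ with $h$ an even function of $z$, so the integrand $z\,h(z)$ is odd and
$$
\int_{M_s}\eta=\int_{M_s}z\mu=2\pi\int_{-1}^{1}z\,h(z)\,dz=0 .
$$
Since a de Rham class on $M_s\cong S^2$ is integral exactly when its period over the fundamental cycle is an integer, the vanishing period shows $[\eta]\in H^2(M_s,\mathbb{Z})$. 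Finally, in dimension $2$ the form $B$ is a function and $\eta=\iota_B(\mu)$ reduces to $\eta=B\mu$, which is exactly our choice $B=z$; with all hypotheses in hand, Proposition~\ref{CEn=2} produces the group $\widehat{\mathcal D}_\mu(M_s)$ and its Euler-Arnol'd equation \eqref{E-A-2}.

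I expect the only step requiring genuine thought to be the integrality of $[\eta]$, and the main obstacle there is conceptual rather than computational: one must know the correct identification of integral de Rham classes with integer periods before the symmetry argument can be applied. Once it is observed that the reflection $z\mapsto -z$ forces $\int_{M_s}z\mu=0$, integrality is immediate (the class is trivial), and the remaining verifications --- the regular-value computation and the identity $\eta=B\mu$ in dimension $2$ --- are routine. Thus the whole proof is a short checklist against Proposition~\ref{CEn=2}, with the symmetry-driven vanishing of the period as its only real content.
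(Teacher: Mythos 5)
Your proof is correct and takes essentially the same approach as the paper: both reduce to Proposition~\ref{CEn=2} by observing that $\int_{M_{s}}z\mu=0$, so that $[\eta]=0\in H^{2}(M_{s},\mathbb{Z})$ is (trivially) integral. The paper's proof is simply terser, leaving the symmetry argument for the vanishing period and the verifications $\dim M_{s}=2$, $H^{1}(M_{s},\mathbb{R})=0$ implicit, whereas you spell them out.
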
	
\begin{proof}
	Note that $\eta =0\in H^{2}(M_{s},{\mathbb Z})$ because
	$$
	\int_{M_{s}}z\mu=0.
	$$
	Thus, the proposition follows from Proposition \ref{CEn=2}.
\end{proof}
	
Take a ``spherical coordinate'' of $M_{s}$:
\begin{equation*}
\begin{array}{cccccc}
\phi:=\phi_{s}
&
:
&
(-d,d)\times (-\pi,\pi)
&
\to 
&
M_{s}
\\
&&
(r,\theta)
&
\mapsto
&
(c_{1}(r)\cos\theta,
c_{1}(r)\sin\theta,
c_{2}(r)
)
&
\end{array}
\end{equation*}
in such a way that 
$c_{2}(0)=0$,
$c_{1}(r)>0$, $\dot{c}_{2}(r)>0$,
and that
$\dot{c}_{1}^{2}+\dot{c}_{2}^{2}=1$.
Note that
$(c_{1},c_{2},d)=(\cos(r),\sin(r),\pi/2)$
in the case of $s=1$ ($M_{1}=S^{2}$).
Then,
we obtain
$$
g(\partial_{r},\partial_{r})=
1,
\quad
g(\partial_{r},\partial_{\theta})=0,
\quad
g(\partial_{\theta},\partial_{\theta})=c_{1}^{2}
$$
and
$$
\mu=c_{1}(r)d\theta\wedge dr.
$$
This implies
\begin{equation*}
\star\partial_{r}=\frac{-\partial_{\theta}}{c_{1}},
\quad
\star dr=-c_{1}d\theta,
\qquad
\star \partial_{\theta}=c_{1}\partial_{r},
\quad
\star d\theta=\frac{dr}{c_{1}},
\label{formula-Hodge-RSCS}
\end{equation*}
and
\begin{equation*}
\langle X,Y\rangle
=
\int_{-d}^{d}
\int_{-\pi}^{\pi}
\left(
X_{1}Y_{1}
+
X_{2}Y_{2}
c_{1}^{2}
\right)c_{1}
d\theta dr
\label{integration}
\end{equation*}
for
$X=
X_{1}\partial_{r}
+
X_{2}\partial_{\theta}$
and
$Y=
Y_{1}\partial_{r}
+
Y_{2}\partial_{\theta}$,
which are elements of ${\mathfrak g}$.
Moreover,
we have
\begin{eqnarray*}
\operatorname{grad}f&=&\partial_{r}f\partial_{r}+c_{1}^{-2}\partial_{\theta}f\partial_{\theta},\\
\operatorname{div}u&=&(\partial_{r}+c_{1}^{-1}\partial_{r}c_{1})u_{1}
                    +\partial_{\theta}u_{2}
\end{eqnarray*}         
for 
a function $f$ on $M$
and
$u=u_{1}\partial_{r}+u_{2}\partial_{\theta}$.
Recall that we call a vector field $Z$ on $M_{s}$ a zonal flow if 
$Z$ has the form
$$
Z = F(r)\partial_{\theta}
$$
for some function, which depends only on the variable $r$.
Moreover,
if $F\leq 0$,
we call $Z$ a west-facing zonal flow.

\begin{Lemma}
\label{omegaZ}
	Let $Z=F(r)\partial_{\theta}$ be a zonal flow.
	Then, we have
	\begin{eqnarray*}
	\Omega(Z,Y)
	&=&
	0
	\\
		\Omega(Y,[Y,Z])&=&\int_{-d}^{d}\int_{-\pi}^{\pi}
	c_{1}^{2}
	Y_{1}^{2}
	F
	\partial_{r}c_{2}
	dr d\theta.
	\end{eqnarray*}
for $Y=Y_{1}\partial_{r}+Y_{2}\partial_{\theta}\in{\mathfrak g}$.
\end{Lemma}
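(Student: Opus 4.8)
The plan is to reduce both identities to explicit integrals over the coordinate rectangle $(-d,d)\times(-\pi,\pi)$ and then exploit the divergence-free condition together with integration by parts. First I would rewrite the cocycle in coordinates. Using \eqref{omega-P}, the relation $B=z=c_2(r)$, and the Hodge formulas $\star\partial_r=-c_1^{-1}\partial_\theta$, $\star\partial_\theta=c_1\partial_r$ (so that $B\times X=c_2\star X$), one computes for $X=X_1\partial_r+X_2\partial_\theta$ and $Y=Y_1\partial_r+Y_2\partial_\theta$ in $\mathfrak g$ the expression
\begin{eqnarray*}
\Omega(X,Y)=\int_{-d}^{d}\int_{-\pi}^{\pi}c_1^2 c_2\,(X_2 Y_1-X_1 Y_2)\,d\theta\,dr,
\end{eqnarray*}
where the projection $P$ may be dropped because $Y$ is already divergence-free. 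This formula is the computational backbone for both assertions.

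For the first identity I would substitute $Z=F(r)\partial_\theta$, that is $Z_1=0$ and $Z_2=F$, which collapses the integrand to $c_1^2 c_2 F Y_1$; since this coefficient depends only on $r$, it remains to show that $\int_{-\pi}^{\pi}Y_1\,d\theta=0$ for each $r$. This follows from the divergence-free condition $c_1^{-1}\partial_r(c_1 Y_1)+\partial_\theta Y_2=0$: integrating over $\theta$ annihilates the $\partial_\theta Y_2$ term by $2\pi$-periodicity, so $c_1\int_{-\pi}^{\pi}Y_1\,d\theta$ is constant in $r$; since $c_1$ vanishes at the poles $r=\pm d$ while $Y$ remains bounded on the compact surface $M_s$, this constant must be zero.

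For the second identity I would first compute the bracket directly, obtaining $[Y,Z]=-F(\partial_\theta Y_1)\partial_r+(Y_1\partial_r F-F\partial_\theta Y_2)\partial_\theta$, and insert it into the expression for $\Omega(Y,\cdot)$. The resulting integrand splits into a part proportional to $\partial_r F$ and a part containing $\theta$-derivatives of $Y$. To process the latter I would replace every occurrence of $\partial_\theta Y_2$ by $-c_1^{-1}\partial_r(c_1 Y_1)$, integrate by parts in $\theta$ (boundary terms vanishing by periodicity), and then integrate by parts in $r$ after the substitution $u:=c_1 Y_1$; here the boundary terms vanish because $u=c_1 Y_1$ is zero at $r=\pm d$. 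The contributions involving $c_2\,\partial_r F$ then cancel against the $\partial_r F$-part, leaving exactly $\int c_1^2 Y_1^2 F\,\partial_r c_2\,dr\,d\theta$, as claimed.

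The main obstacle I anticipate is the careful bookkeeping in the second identity: one must track the precise coefficients through two successive integrations by parts and verify that all boundary contributions — both the periodic ones in $\theta$ and the polar ones at $r=\pm d$, where the chart degenerates as $c_1\to0$ — genuinely vanish, so that the $c_2\,\partial_r F$ terms cancel and only the $\partial_r c_2$ term survives. The substitution $u=c_1 Y_1$ is the device that makes this cancellation transparent, since it turns the relevant $r$-integrand into a total derivative whose antiderivative carries the factor $c_1^2 Y_1^2$ that kills the boundary.
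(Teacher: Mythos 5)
Your proposal is correct, and for the second identity it is essentially the paper's own computation: the same coordinate expression for $[Y,Z]$, an integration by parts in $\theta$ using periodicity, substitution of the divergence-free relation $\partial_{\theta}Y_{2}=-c_{1}^{-1}\partial_{r}(c_{1}Y_{1})$, and a final integration by parts in $r$; your substitution $u=c_{1}Y_{1}$ merely repackages the paper's step of recognizing the integrand as $-B\,\partial_{r}(c_{1}^{2}Y_{1}^{2}F)$ before applying Stokes. Where you genuinely diverge is the first identity: the paper observes that $B\star Z=c_{1}c_{2}F\,\partial_{r}$ is a gradient field, hence is annihilated by the projection $P$, so $\Omega(Z,Y)=\langle P(B\star Z),Y\rangle=0$ with no further computation; you instead evaluate the integral $\int c_{1}^{2}c_{2}F\,Y_{1}$ directly and prove the zero-average identity $\int_{-\pi}^{\pi}Y_{1}\,d\theta=0$ from divergence-freeness (which gives $\partial_{r}\bigl(c_{1}\int_{-\pi}^{\pi}Y_{1}\,d\theta\bigr)=0$) together with the vanishing of $c_{1}$ at $r=\pm d$. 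Both arguments are valid and are two faces of the same fact, namely the $L^{2}$-orthogonality of gradients and divergence-free fields: the paper's version is shorter and avoids any discussion of the degenerate polar boundary, while yours is more self-contained (the only property of $P$ you use is that it can be dropped against a divergence-free field) and your explicit coordinate formula $\Omega(X,Y)=\int c_{1}^{2}c_{2}(X_{2}Y_{1}-X_{1}Y_{2})\,d\theta\,dr$ makes both identities mechanical. A further merit of your write-up is that you explicitly justify the vanishing of the boundary contributions at $r=\pm d$ (via $c_{1}\to 0$ and boundedness of $Y$), a point the paper's appeal to the Stokes theorem leaves implicit.
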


\begin{proof}

Recall that $B=z=c_{2}(r)$
and 
that 
$$
\Omega(Z,Y)=\langle P(B\times Z), Y\rangle
=
\langle P(B\star Z),Y \rangle.
$$
The last equality follows from $\dim M=2$. On the other hand,
\begin{eqnarray*}
B\star Z =c_{1}(r)c_{2}(r) F(r)\partial_{r}.
\end{eqnarray*}
This expression implies that there exists a function $f$ satisfying
$\operatorname{grad}f=B\star Z $.
Thus we have $P(B\star Z)=0$
which implies the first equality.

For the second equality,
we have
\begin{eqnarray*}
	[Y,Z]
	&=&
	-F\partial_{\theta}Y_{1}\partial_{r}
	+
	\left(
	Y_{1}\partial_{r}F
	-
	F\partial_{\theta}Y_{2}
	\right)
	\partial_{\theta}
	\\
	\star BY
	&=&
	B
	\left(
	c_{1}Y_{2}\partial_{r}
	-\frac{Y_{1}}{c_{1}}\partial_{\theta}
	\right).
\end{eqnarray*}
Moreover,
\begin{eqnarray*}
	&&\Omega(Y,[Y,Z])
	\\
	&=&
	\langle
	P(\star BY),
	[Y,Z]
	\rangle
	\\
	&=&
	\langle
	\star BY,
	[Y,Z]
	\rangle
	\\
	&=&
	\int_{-d}^{d}
	\int_{-\pi}^{\pi}
	B
	\left(
	-Fc_{1}Y_{2}
	\partial_{\theta}Y_{1} 
	-
	c_{1}Y_{1}
	\left(
	Y_{1}\partial_{r}F
	-
	F\partial_{\theta}Y_{2}
	\right)
	\right)
	c_{1}drd\theta
	\\
	&=&
	-
	\int_{-d}^{d}
	\int_{-\pi}^{\pi}
	B
	\left(
	Fc_{1}^{2}(Y_{2}\partial_{\theta}Y_{1} 
	-
    Y_{1}\partial_{\theta}Y_{2})
	+
	c_{1}^{2} Y_{1}^{2} \partial_{r}F 
	\right)
	drd\theta.
\end{eqnarray*}
This is equal to
\begin{eqnarray*}
	&=&
	-
	\int_{-d}^{d}
	B
	Fc_{1}^{2}
	\int_{-\pi}^{\pi}
	\partial_{\theta}(Y_{1}Y_{2})
	d\theta dr
	\\
	&&
	-
	\int_{-d}^{d}
	\int_{-\pi}^{\pi}
	B
	\left(
	c_{1}^{2} Y_{1}^{2} \partial_{r}F
	-
	2c_{1}^{2}Y_{1} F\partial_{\theta}Y_{2} 
	\right)
	drd\theta
	\\
	&=&
	-
	\int_{-d}^{d}
	\int_{-\pi}^{\pi}
	B
	\left(
		c_{1}^{2} Y_{1}^{2} \partial_{r}F
	-
	2c_{1}^{2}Y_{1} F\partial_{\theta}Y_{2}
	\right)
	drd\theta.
\end{eqnarray*}
Recall that
$$
\operatorname{div}Y
=
\partial_{r}Y_{1}
+
\frac{\partial_{r}c_{1}}
{c_{1}}
Y_{1}
+
\partial_{\theta}Y_{2}.
$$
Thus,
divergence-freeness of $Y$
implies
\begin{eqnarray*}
&&\Omega(Y,[Y,Z])
\\
&=&
	-
	\int_{-d}^{d}
	\int_{-\pi}^{\pi}
	B
	\left(
		c_{1}^{2} Y_{1}^{2} \partial_{r}F
	+
	2c_{1}^{2}Y_{1} F
	\left(
	\partial_{r}Y_{1}
	+
	\frac{\partial_{r}c_{1}}
	{c_{1}}
	Y_{1}
	\right)
	\right)
	drd\theta
	\\
	&=&
	-
	\int_{-d}^{d}
	\int_{-\pi}^{\pi}
	B
	\left(
	c_{1}^{2} Y_{1}^{2} \partial_{r}F
	+
	c_{1}^{2}\partial_{r}(Y_{1}^{2}) F
	+
   \partial_{r}(c_{1}^{2})Y_{1}^{2} F
	\right)
	drd\theta.
	\\
\end{eqnarray*}
By the Stokes theorem,
this is equal to
\begin{equation*}
	=\int_{-d}^{d}\int_{-\pi}^{\pi}
	\partial_{r}B
	c_{1}^{2}
	Y_{1}^{2}
	F
	dr d\theta.
\end{equation*}
This completes the proof.
\end{proof}

\begin{Corollary}
\label{MCMC}
		Let $Z=F(r)\partial_{\theta}$ be a zonal flow.
	Then, we have
	\begin{eqnarray*}
	MC_{(Z,a),(Y,b)}
	=
	MC_{Z,Y}
	-a
	\int_{-d}^{d}\int_{-\pi}^{\pi}
	c_{1}^{2}
	Y_{1}^{2}
	F
	\sqrt{1-\dot{c}_{1}^{2}}
	dr d\theta
	\end{eqnarray*}
for $Y=Y_{1}\partial_{r}+Y_{2}\partial_{\theta}\in {\mathfrak g}$.
\end{Corollary}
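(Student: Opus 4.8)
The plan is to specialize the general Misio{\l}ek-curvature formula \eqref{MC-CE} of Proposition \ref{CEn=2} to the direction $X=Z$ and then feed in the two identities supplied by Lemma \ref{omegaZ}. Setting $X=Z$ in \eqref{MC-CE} yields
\[
MC_{(Z,a),(Y,b)}
=
MC_{Z,Y}
-\Omega(Z,Y)^{2}
-a\,\Omega([Z,Y],Y).
\]
Because $Z$ is a zonal flow, the first identity of Lemma \ref{omegaZ} gives $\Omega(Z,Y)=0$, so the quadratic term vanishes outright and only the term linear in $a$ remains to be analyzed.

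The one step demanding care is reconciling $\Omega([Z,Y],Y)$ with the quantity $\Omega(Y,[Y,Z])$ that Lemma \ref{omegaZ} actually evaluates. Here I would invoke that $\Omega$ is skew-symmetric, being the integral of the $2$-form $\eta$ paired against its two vector arguments, together with the skew-symmetry $[Z,Y]=-[Y,Z]$ of the Lie bracket. These two sign reversals cancel, giving $\Omega([Z,Y],Y)=-\Omega(Y,[Z,Y])=\Omega(Y,[Y,Z])$, so the linear term becomes $-a\,\Omega(Y,[Y,Z])$, to which the second identity of Lemma \ref{omegaZ} applies verbatim.

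It remains only to rewrite the factor $\partial_{r}c_{2}$ appearing in Lemma \ref{omegaZ} in terms of $\dot c_{1}$. The spherical coordinate $\phi_{s}$ was chosen so that $\dot c_{1}^{2}+\dot c_{2}^{2}=1$ with $\dot c_{2}>0$, whence $\partial_{r}c_{2}=\dot c_{2}=\sqrt{1-\dot c_{1}^{2}}$; substituting this into the integral produces exactly the claimed expression. I do not anticipate a genuine obstacle beyond the sign bookkeeping in the second step: the corollary is essentially a direct substitution of Lemma \ref{omegaZ} into \eqref{MC-CE}, with the skew-symmetry identification being the only place a careless sign could creep in.
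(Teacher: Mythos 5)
Your proposal is correct and follows essentially the same route as the paper's own proof: specialize \eqref{MC-CE} to $X=Z$, substitute the two identities of Lemma \ref{omegaZ}, identify $\Omega([Z,Y],Y)=\Omega(Y,[Y,Z])$, and use $\dot c_{1}^{2}+\dot c_{2}^{2}=1$ with $\dot c_{2}>0$ to rewrite $\partial_{r}c_{2}=\sqrt{1-\dot c_{1}^{2}}$. If anything, your explicit derivation of the sign identity from the skew-symmetry of $\Omega$ together with $[Z,Y]=-[Y,Z]$ is more precise than the paper's terse appeal to ``the fact that $\Omega$ is a 2-cocycle,'' since what is actually used there is exactly the skew-symmetry you spell out.
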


\begin{proof}
	This is a consequence of \eqref{MC-CE}, Lemma \ref{omegaZ} and the definition of $c_{1},c_{2}$.
	Note that $\Omega([Z,Y],Y)=\Omega(Y,[Y,Z])$
	by the fact that $\Omega$ is a 2-cocycle.
\end{proof}

\begin{proof}[Proof of Theorem \ref{Main}]
Corollary \ref{MCMC} and \eqref{MC-CE} imply the theorem.
\end{proof}
	
\begin{proof}[Proof of Corollary \ref{>0}]
	This follows from Theorem \ref{Main} and
	Fact \ref{TTthm}.
\end{proof}

\begin{Fact}[{\cite[Thm.\:1.2]{TTconj}}]
	\label{TTthm}
	Let $s>1$. Then,
	for any zonal flow $Z$ on $M_{s}$ whose support is contained in $M_{s}\backslash\{(0,0,\pm 1)\}$,
	there exists $Y\in {\mathfrak g}$
	satisfying $MC_{Z,Y}>0$.
\end{Fact}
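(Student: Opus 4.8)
The plan is to estimate the Misio{\l}ek curvature $MC_{Z,Y}=-\|[Z,Y]\|^{2}-\langle Z,[[Z,Y],Y]\rangle$ of \eqref{MC-form-2} directly, using the coordinate formulas for the bracket and the $L^{2}$ metric on $M_{s}$ recorded above. Writing $Z=F(r)\partial_{\theta}$ and $Y=Y_{1}\partial_{r}+Y_{2}\partial_{\theta}\in{\mathfrak g}$, the computation that produced $[Y,Z]$ in the proof of Lemma \ref{omegaZ} gives
\[
[Z,Y]=F\,\partial_{\theta}Y_{1}\,\partial_{r}+\bigl(F\,\partial_{\theta}Y_{2}-Y_{1}\,\partial_{r}F\bigr)\partial_{\theta},
\]
so $\|[Z,Y]\|^{2}$ and the iterated bracket $[[Z,Y],Y]$ become explicit differential expressions in $Y_{1},Y_{2}$ with coefficients depending on $r$ only through $F,c_{1},c_{2}$. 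A useful bookkeeping device is the identity $MC_{Z,Y}=\langle B(Z,Y)-[Z,Y],[Z,Y]\rangle$, where $B(Z,Y)$ is defined by $\langle B(Z,Y),W\rangle=\langle Z,[Y,W]\rangle$ for all $W\in{\mathfrak g}$; this rewrites the curvature as a single pairing and ties it to Arnol'd's stability functional for the steady flow $Z$. Since $Z$ is zonal, $Y\mapsto MC_{Z,Y}$ is invariant under rotations $\theta\mapsto\theta+\mathrm{const}$, so it decouples over Fourier modes in $\theta$; I would therefore test with a single mode $Y(r,\theta)=(\text{radial profile})\times(\text{mode }k)$, encoded through its stream function so that $Y$ is automatically divergence-free.

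First I would perform the $\theta$-integrations: by orthogonality the curvature collapses to a one-dimensional quadratic functional $Q[h]$ of the radial profile $h$, the $\theta$-average producing explicit constants in $k$. Then, exactly as in Lemma \ref{omegaZ}, I would integrate by parts in $r$ using the constraint $\partial_{r}Y_{1}+(\partial_{r}c_{1}/c_{1})Y_{1}+\partial_{\theta}Y_{2}=0$ to eliminate the top-order derivatives and to recast $Q[h]$ as an integral of a signed combination of $h$ and $h'$ weighted by $F,c_{1}$ and their derivatives. The purpose of this normal form is to separate the nonnegative contribution of $\|[Z,Y]\|^{2}$ from that of the iterated-bracket term and to isolate a coefficient carrying the dependence on the geometry of $M_{s}$.

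The positivity would then follow by choosing $h$ supported in a small interval around a radius $r_{0}$ with $F(r_{0})\neq0$; such $r_{0}$ exists in $(-d,d)$ because $Z\neq0$ and its support avoids the poles $(0,0,\pm1)$, which also keeps the analysis away from the degeneracy $c_{1}\to0$ at $r=\pm d$. For this localized test field the dominant part of $Q[h]$ is governed by a local geometric factor built from $c_{1},c_{2}$ and their derivatives at $r_{0}$, and the key analytic input is that this factor is strictly favorable when $s>1$, degenerating at $s=1$ in accordance with the hypothesis. Tuning $k$ and $h$ so that this positive term dominates the negative $-\|[Z,Y]\|^{2}$ term yields $MC_{Z,Y}>0$.

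I expect the main obstacle to lie in the iterated bracket $\langle Z,[[Z,Y],Y]\rangle$ and the ensuing integration by parts: one must track several cancellations to reach a normal form in which the $s$-dependence sits in a single sign-definite term, and then confirm that the localized profile can be arranged to make that term beat $\|[Z,Y]\|^{2}$ rather than be swamped by it. A lesser point is to check that the chosen $Y$ is a genuine element of ${\mathfrak g}$, smooth and periodic across $\theta=\pm\pi$, which the stream-function construction guarantees.
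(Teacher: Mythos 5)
This Fact is not actually proved in the paper: it is imported verbatim from \cite[Thm.\:1.2]{TTconj}, so the only benchmark is the proof in that reference. Your outline does mirror the general strategy used there---explicit coordinates on the surface of revolution, reduction of $MC_{Z,Y}$ to a quadratic functional of a radial profile via a stream-function/Fourier-mode ansatz, and a test field localized in the support of $F$ away from the poles (where $c_{1}$ degenerates)---and your bookkeeping is correct as far as it goes: your formula for $[Z,Y]$ agrees with the bracket computed in Lemma \ref{omegaZ}, the identity $MC_{Z,Y}=\langle B(Z,Y)-[Z,Y],[Z,Y]\rangle$ is a valid rewriting, and rotation invariance does decouple the quadratic form over $\theta$-modes.

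However, the proposal has a genuine gap, and it sits exactly where the theorem lives. You never derive the normal form $Q[h]$; you never write down the ``local geometric factor'' that is supposed to carry the $s$-dependence; and the only reason you give for that factor being ``strictly favorable when $s>1$, degenerating at $s=1$'' is that this is what the statement's hypothesis suggests---which is circular, since that claim \emph{is} the theorem. Moreover, the final step, ``tuning $k$ and $h$ so that this positive term dominates the negative $-\|[Z,Y]\|^{2}$ term,'' cannot be carried out by scaling alone: both terms of $MC_{Z,Y}$ are quadratic in $Y$, so they scale identically under changes of amplitude and of localization scale, and raising the mode number $k$ makes the negative contribution $\|F\,\partial_{\theta}Y_{1}\|^{2}\sim k^{2}$ worse, not better. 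Positivity must instead come from a pointwise sign condition on the integrand of the normal form, i.e., from an explicit inequality between the iterated-bracket term and $\|[Z,Y]\|^{2}$ in which the geometry of $M_{s}$ (through $c_{1},\dot c_{1},\dot c_{2}$) enters with a favorable sign precisely when $s>1$. Deriving and verifying that inequality is the entire content of \cite[Thm.\:1.2]{TTconj}, and it is absent from your argument: what you have is a plausible plan for a proof, not a proof.
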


\section{Final remark}
Note that we do not know whether $\widehat{\mathcal D}_{\mu}(M_{s})$,
whose existence is guaranteed by Proposition \ref{Existence-D},
satisfies the assumption of Fact \ref{Mcriterion}.
Therefore,
we cannot conclude the existence of a conjugate point on
$\widehat{\mathcal D}_{\mu}(M_{s})$.
This is still under intensive research.

\renewcommand{\thesection}{\Alph{section}}
\setcounter{section}{0}
	
\section{Appendix}

\subsection{Sign conventions}
In this subsection,
we briefly derive the formula \eqref{A-E-eq-D} in order to
clarify our sign conventions.	
Therefore all contents in this section are known.

We refer to \cite[Section 46]{C-G} or \cite[Section 2]{Vizman-GEDG}.

\subsubsection{Right-invariant Maurer-Cartan form}
Let $G$ be a (possibly infinite-dimensional) Lie group and ${\mathfrak g}$ its Lie algebra.
\begin{Definition}
	The right-invariant Maurer-Cartan form $\omega$
	is the ${\mathfrak g}$-valued 1-form on $G$ defined by
	$$
	\omega_{g}(X) :=
	r_{g^{-1}}X \in {\mathfrak g},
	$$
	where $r_{g^{-1}}$ is the differential of the right translation map
	$R_{g^{-1}}(h):=hg^{-1}$.
\end{Definition}
	
	For $X\in {\mathfrak g}$, we write $X^{R}$ for the right-invariant vector fields on $G$ with $X^{R}(e)=X$.
	Note that
	\begin{eqnarray}
	[X^{R},Y^{R}] = - [X,Y]^{R}.
	\label{sign-bracket}
	\end{eqnarray}
	
	\begin{Lemma}
	For $X,Y\in{\mathfrak g}$, we have
		$$
		\omega([X^{R},Y^{R}])
		=
		[\omega(X^{R}),\omega(Y^{R})].
		$$
	\end{Lemma}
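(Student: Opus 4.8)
The plan is to reduce the identity to two ingredients: the behaviour of the right-invariant Maurer–Cartan form $\omega$ on right-invariant vector fields, and the bracket relation \eqref{sign-bracket}. First I would show that $\omega$ acts as the identity on right-invariant fields, i.e.\ that $\omega(X^{R})$ is the constant $\mathfrak g$-valued function equal to $X$. Evaluating at an arbitrary $g\in G$, and using $X^{R}(g)=r_{g}X$ together with the definition $\omega_{g}(v)=r_{g^{-1}}v$, one gets
$$
\omega_{g}\bigl(X^{R}(g)\bigr)=r_{g^{-1}}\bigl(r_{g}X\bigr)=(R_{g^{-1}}\circ R_{g})_{\ast}X=X,
$$
since $R_{g^{-1}}\circ R_{g}=\mathrm{id}_{G}$ forces the composite of the differentials to be the identity of $\mathfrak g=T_{e}G$. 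Consequently $\omega(X^{R})$ and $\omega(Y^{R})$ are the constants $X$ and $Y$, so the right-hand side $[\omega(X^{R}),\omega(Y^{R})]$ is the pointwise Lie-algebra bracket of two constant functions, namely the constant $[X,Y]\in\mathfrak g$.

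Next I would compute the left-hand side from \eqref{sign-bracket} and the fibrewise linearity of $\omega$. Since $[X^{R},Y^{R}]=-[X,Y]^{R}$ and $\omega_{g}$ is linear on each $T_{g}G$,
$$
\omega\bigl([X^{R},Y^{R}]\bigr)=-\,\omega\bigl([X,Y]^{R}\bigr),
$$
and applying the first step to the element $[X,Y]\in\mathfrak g$ yields $\omega\bigl([X,Y]^{R}\bigr)=[X,Y]$. Comparing the two computations then closes the argument once the sign conventions on $\mathfrak g$ are fixed consistently. As an alternative route that makes the signs automatic, one can instead invoke the intrinsic formula $d\omega(U,V)=U\bigl(\omega(V)\bigr)-V\bigl(\omega(U)\bigr)-\omega([U,V])$ with $U=X^{R}$, $V=Y^{R}$: the first two terms vanish because $\omega(X^{R})$ and $\omega(Y^{R})$ are constant, reducing the structure equation directly to the bracket relation.

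The step requiring genuine care — and indeed the reason this computation belongs in the sign-conventions appendix — is the bookkeeping of signs: one must track the minus sign in \eqref{sign-bracket}, the orientation implicit in defining $[\,\cdot\,,\,\cdot\,]$ on $\mathfrak g$ via right- rather than left-invariant extension, and the convention for the bracket of $\mathfrak g$-valued functions, so that the two sides match exactly. Everything else is a one-line consequence of $R_{g^{-1}}\circ R_{g}=\mathrm{id}_{G}$; no analytic input concerning the infinite-dimensional structure of $G$ is needed, since the statement is purely formal and holds for any Lie group.
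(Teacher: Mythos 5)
Your first step is fine and matches what the paper does implicitly: $\omega_{g}(X^{R}(g))=r_{g^{-1}}r_{g}X=X$, so $\omega(X^{R})$ is the constant function $X$. But your two computations then land on opposite sides of a sign: you get $\omega([X^{R},Y^{R}])=-[X,Y]$ for the left-hand side, while reading $[\omega(X^{R}),\omega(Y^{R})]$ as the pointwise Lie-algebra bracket of the constants $X$ and $Y$ gives $+[X,Y]$ for the right-hand side. Deferring this with ``once the sign conventions on $\mathfrak g$ are fixed consistently'' is not a resolution — it is precisely the content of the lemma, and note that \eqref{sign-bracket} already pins down the convention on $\mathfrak g$ (it is the bracket for which $[X^{R},Y^{R}]=-[X,Y]^{R}$), so there is no remaining freedom on the $\mathfrak g$ side to absorb the sign. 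Taken at face value, your computation therefore disproves the statement rather than proves it. The way the paper's proof closes the gap is by interpreting the right-hand bracket differently: $\omega(Z^{R})$ is identified with $Z^{R}$ itself (constant $\mathfrak g$-valued functions with right-invariant fields), so that $[\omega(X^{R}),\omega(Y^{R})]$ means the vector-field bracket $[X^{R},Y^{R}]$; then the chain $\omega([X^{R},Y^{R}])=\omega(-[X,Y]^{R})=-[X,Y]^{R}=[X^{R},Y^{R}]=[\omega(X^{R}),\omega(Y^{R})]$ closes, with both sides equal to (the constant corresponding to) $-[X,Y]$. In other words, the missing idea is the specification of which bracket is being used on $\mathfrak g$-valued forms, and with the pointwise choice you made, the step fails.

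Your proposed alternative route is also not a repair: applying $d\omega(U,V)=U(\omega(V))-V(\omega(U))-\omega([U,V])$ to $U=X^{R}$, $V=Y^{R}$ only yields $d\omega(X^{R},Y^{R})=-\omega([X^{R},Y^{R}])$. This is exactly the intermediate step in the proof of the \emph{next} lemma of the paper (which then invokes the present lemma to replace $\omega([X^{R},Y^{R}])$ by $[\omega(X^{R}),\omega(Y^{R})]$); it contains no information whatsoever about the term $[\omega(X^{R}),\omega(Y^{R})]$, so it cannot establish the equality at issue and using it here would be circular.
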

	\begin{proof}
	By \eqref{sign-bracket},
	we have
	\begin{eqnarray*}
		\omega([X^{R},Y^{R}])
		=
		\omega(-[X,Y]^{R})
		=
		-[X,Y]^{R}
		=
		[X^{R},Y^{R}]
		=
		[\omega(X^{R}),\omega(Y^{R})],
		\end{eqnarray*}
		which completes the proof.
	\end{proof}
	
	\begin{Lemma}
	For smooth vector fields $U,V$ on $G$,
	we have
		$$
		d\omega(U,V)
		=
		-[\omega(U),\omega(V)].
		$$
	\end{Lemma}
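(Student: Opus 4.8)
The plan is to reduce the identity, which is an equality of $\mathfrak{g}$-valued $2$-forms on $G$, to a pointwise check on the global frame furnished by the right-invariant vector fields. The main tool is the coordinate-free formula for the exterior derivative of a $1$-form, which for a $\mathfrak{g}$-valued form $\omega$ reads
$$
d\omega(U,V) = U\bigl(\omega(V)\bigr) - V\bigl(\omega(U)\bigr) - \omega([U,V]),
$$
the $\mathfrak{g}$-valued case following from the scalar one by fixing a basis of $\mathfrak{g}$ and applying the formula componentwise.

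First I would observe that both sides of the asserted identity are \emph{tensorial}, that is, $C^{\infty}(G)$-bilinear and skew-symmetric in $(U,V)$. For the left-hand side this is automatic, since $d\omega$ is a $2$-form. For the right-hand side it follows because $\omega$ is a $1$-form, so $\omega(fU)=f\,\omega(U)$, while the bracket on $\mathfrak{g}$ is $\mathbb{R}$-bilinear and alternating; hence $-[\omega(fU),\omega(V)] = -f\,[\omega(U),\omega(V)]$, and the expression changes sign under swapping $U$ and $V$. Consequently it suffices to verify the identity after substituting $U=X^{R}$ and $V=Y^{R}$ for $X,Y\in\mathfrak{g}$, since the right-invariant fields span each tangent space of $G$.

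Next I would evaluate the formula on this frame. Because $\omega(X^{R})=X$ and $\omega(Y^{R})=Y$ are \emph{constant} $\mathfrak{g}$-valued functions on $G$ (the defining property of the right-invariant Maurer-Cartan form), the first two terms $X^{R}\bigl(\omega(Y^{R})\bigr)$ and $Y^{R}\bigl(\omega(X^{R})\bigr)$ vanish, leaving
$$
d\omega(X^{R},Y^{R}) = -\,\omega([X^{R},Y^{R}]).
$$
Applying the preceding Lemma, $\omega([X^{R},Y^{R}]) = [\omega(X^{R}),\omega(Y^{R})]$, immediately gives $d\omega(X^{R},Y^{R}) = -[\omega(X^{R}),\omega(Y^{R})]$, which is exactly the right-hand side restricted to the frame. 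By the tensoriality established above, the identity then holds for arbitrary smooth $U,V$.

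There is no serious obstacle here; the statement is the structural (Maurer-Cartan) equation for the right-invariant form, and, as noted, everything in this appendix is known. The only point deserving care is the reduction to the right-invariant frame: one must confirm that a tensorial $2$-form is determined by its values on the right-invariant fields, which in finite dimensions is the usual frame argument and in the present infinite-dimensional setting is understood at the same formal level used throughout this appendix. The sign in the conclusion is then dictated entirely by the sign in \eqref{sign-bracket}, propagated through the preceding Lemma.
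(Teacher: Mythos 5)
Your proof is correct and follows essentially the same route as the paper's: evaluate the exterior-derivative formula on the right-invariant frame, note that $\omega(X^{R})=X$ is constant so the derivative terms vanish, invoke the preceding Lemma, and conclude by tensoriality. Your write-up merely makes explicit the $C^{\infty}(G)$-bilinearity argument that the paper compresses into the remark that the identity holds ``as ${\mathfrak g}$-valued 2-forms.''
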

	\begin{proof}
		Recall that
		$$
		d\omega(U,V)
		=
		U(\omega(V))
		-
		V(\omega(U))
		-
		\omega([U,V]).
		$$
		In the case of $U=X^{R}$ and $V=Y^{R}$,
		we have
		\begin{eqnarray*}
		d\omega(X^{R},Y^{R})
		&=&
		-\omega([X^{R},Y^{R}])
		\\
		&=&
		-[\omega(X^{R}),\omega(Y^{R})].
		\end{eqnarray*}
		This equation is the one as ${\mathfrak g}$-valued 2-forms.
		Thus this holds for any $U,V$.
	\end{proof}

	\begin{Corollary}
	\label{Lemma-R-M-UVUV}
	For smooth vector fields $U,V$ on $G$,
	we have
		$$
		[\omega(U),\omega(V)]
		=
		-
		U(\omega(V))
		+
		V(\omega(U)).
		$$
	\end{Corollary}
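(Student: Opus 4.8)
The plan is to read the asserted identity as the infinitesimal (Cartan) form of the Maurer--Cartan structure equation that the preceding Lemma has just supplied. That Lemma gives, for all smooth vector fields $U,V$ on $G$, the structure equation
\[
d\omega(U,V) = -[\omega(U),\omega(V)].
\]
The second ingredient is the intrinsic formula for the exterior derivative of the $\mathfrak g$-valued $1$-form $\omega$,
\[
d\omega(U,V) = U\bigl(\omega(V)\bigr) - V\bigl(\omega(U)\bigr) - \omega([U,V]),
\]
which is exactly the expansion already invoked in proving the preceding Lemma. With both of these in hand, my proof is simply to equate the two expressions for $d\omega(U,V)$.

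Concretely, I would substitute $-[\omega(U),\omega(V)]$ for $d\omega(U,V)$ into the Cartan formula and then transpose terms so as to isolate the pointwise bracket $[\omega(U),\omega(V)]$ on one side. This leaves $[\omega(U),\omega(V)]$ expressed through the two directional derivatives $U(\omega(V))$ and $V(\omega(U))$, the Lie-bracket contribution $\omega([U,V])$ being carried over and absorbed through the structure equation; collecting these terms and reading off the signs yields the asserted relation. No passage to right-invariant fields or tensoriality argument is needed at this stage, since both inputs are already valid for arbitrary $U,V$, so the corollary applies verbatim to all smooth vector fields.

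There is essentially no analytic content to overcome here: the statement is a purely algebraic rearrangement of two formulas that are now established. The only step demanding genuine attention is the sign bookkeeping---ensuring that the signs attached to $U(\omega(V))$ and $V(\omega(U))$ emerge correctly and that the bracket contribution $\omega([U,V])$ is handled in accordance with the convention \eqref{sign-bracket} and with the chosen orientation of the Cartan formula. I therefore expect the only ``obstacle'' to be this routine tracking of signs, which is dictated entirely by the conventions already fixed earlier in the appendix.
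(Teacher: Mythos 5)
Your overall strategy is exactly the paper's: its proof of this corollary is literally ``this is obvious by the preceding lemma,'' meaning one equates the structure equation $d\omega(U,V)=-[\omega(U),\omega(V)]$ with the Cartan expansion $d\omega(U,V)=U(\omega(V))-V(\omega(U))-\omega([U,V])$ and rearranges. So as a reconstruction of the intended argument, your plan is on target.

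However, there is a genuine gap in your execution, located precisely at the step you describe loosely: the term $\omega([U,V])$ is \emph{not} ``carried over and absorbed through the structure equation.'' Equating the two expressions for $d\omega(U,V)$ gives
\[
[\omega(U),\omega(V)] = -U(\omega(V)) + V(\omega(U)) + \omega([U,V]),
\]
and nothing eliminates the last summand: the structure equation has already been spent, and $\omega([U,V])$ is nonzero in general. Indeed, the identity as stated fails for right-invariant fields $U=X^{R}$, $V=Y^{R}$: there $\omega(X^{R})$ and $\omega(Y^{R})$ are the constant $\mathfrak g$-valued functions $X$ and $Y$, so both directional-derivative terms vanish, and the claimed formula would force $[X,Y]=0$. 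The conclusion of the corollary is therefore valid only when $\omega([U,V])=0$, in particular for commuting vector fields $[U,V]=0$ --- which is exactly the only situation in which the paper uses it (the coordinate fields $\dot{\eta}$ and $X_{s}$ of a variation $\eta_{s}(t)$, which commute). To make your proof sound you must either retain the $\omega([U,V])$ term in the conclusion or add the hypothesis $[U,V]=0$. Note that the paper's own one-line proof glosses over the identical point, so this defect is inherited from the statement rather than introduced by your argument; but a proof that claims the term is ``absorbed'' is asserting something false, and the sign bookkeeping you flag as the only obstacle is not where the real difficulty lies.
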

	\begin{proof}
		This is obvious by preceding lemma.
	\end{proof}

\subsubsection{Euler-Arnol'd equation}
Let $G$ be a (possibly infinite-dimensional) Lie group with right-invariant metric $\langle,\rangle$
and
${\mathfrak g}=T_{e}G$ the Lie algebra.
Define $[,]^{*}$ by
\begin{eqnarray*}
	\langle [X,Y]^{*},Z\rangle
	=
	\langle Y,[X,Z]\rangle
\end{eqnarray*}
if it exists.
We always assume the existence of $[,]^{*}$ in this article.

\begin{Lemma}
	Let $\eta$ be a curve on $G$.
	Define a curve $c:[0,t_{0}] \to {\mathfrak g}$ by
	$c(t):=r_{\eta^{-1}}(\dot{\eta})$.
	Then, $\eta$ is a geodesic if and only if $c$ satisfies
	\begin{eqnarray}
	\partial_{t}c=[c,c]^{*}.
	\label{L-eq-EA}
	\end{eqnarray}
	Moreover, for a curve $c:[0,t_{0}] \to {\mathfrak g}$ satisfying \eqref{L-eq-EA},
	there exists 
	a geodesic $\eta$ on $G$ satisfying $c(t)=r_{\eta^{-1}}(\dot{\eta})$ if $G$ is regular in the sense of \cite[Def.\:7.6]{Milnor}.
\end{Lemma}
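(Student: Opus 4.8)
The plan is to characterize geodesics variationally, as critical points of the energy functional $E(\eta)=\frac{1}{2}\int_{0}^{t_{0}}\langle\dot{\eta},\dot{\eta}\rangle\,dt$ among curves with fixed endpoints, and to transport the computation to the Lie algebra by means of the right-invariant Maurer-Cartan form $\omega$. First I would use right-invariance of the metric to rewrite $E(\eta)=\frac{1}{2}\int_{0}^{t_{0}}\langle c,c\rangle\,dt$, where $c(t)=\omega(\dot{\eta})=r_{\eta^{-1}}(\dot{\eta})$. I then introduce a smooth variation $\eta_{s}(t)$ with $\eta_{0}=\eta$ and fixed endpoints, and set $w(t):=r_{\eta^{-1}}(\partial_{s}\eta)|_{s=0}\in{\mathfrak g}$, an arbitrary $\mathfrak g$-valued curve vanishing at $t=0,t_{0}$.

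The key algebraic input is the compatibility (zero-curvature) relation between $c$ and $w$, which I would read off directly from the Maurer-Cartan equation established above (Corollary \ref{Lemma-R-M-UVUV}). Writing $\Phi(s,t):=\eta_{s}(t)$ and pulling back $\omega$, the identity $d(\Phi^{*}\omega)=\Phi^{*}(d\omega)$ evaluated on $(\partial_{s},\partial_{t})$ yields $\partial_{s}c-\partial_{t}w=-[w,c]=[c,w]$, that is, $\partial_{s}c|_{s=0}=\partial_{t}w+[c,w]$. Differentiating the energy then gives
\[
\frac{d}{ds}\Big|_{s=0}E=\int_{0}^{t_{0}}\langle c,\partial_{s}c\rangle\,dt=\int_{0}^{t_{0}}\langle c,\partial_{t}w\rangle\,dt+\int_{0}^{t_{0}}\langle c,[c,w]\rangle\,dt.
\]
Integrating the first term by parts (the boundary terms drop since $w$ vanishes at the endpoints) and rewriting the second through the very definition $\langle c,[c,w]\rangle=\langle[c,c]^{*},w\rangle$, I arrive at $\frac{d}{ds}|_{s=0}E=\int_{0}^{t_{0}}\langle-\partial_{t}c+[c,c]^{*},w\rangle\,dt$. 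Since $\eta$ is a geodesic precisely when this first variation vanishes for every admissible $w$, the fundamental lemma of the calculus of variations yields the equivalence with $\partial_{t}c=[c,c]^{*}$, which is \eqref{L-eq-EA}.

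For the second assertion (the \emph{Moreover} part), given a solution $c$ of \eqref{L-eq-EA} I would invoke regularity of $G$ in Milnor's sense \cite{Milnor} to integrate $c$: regularity provides a smooth curve $\eta$ with $r_{\eta^{-1}}(\dot{\eta})=c$, and running the computation above backwards shows that this $\eta$ is a geodesic. The main obstacle is entirely analytic rather than algebraic: in the infinite-dimensional setting one must justify that geodesics coincide with critical points of the energy (which presupposes the weak Riemannian structure and the existence of the relevant connection), that an arbitrary $\mathfrak g$-valued $w$ vanishing at the endpoints is actually realized by some genuine variation $\eta_{s}$ so that the fundamental lemma applies, and --- most importantly --- that the Lie-algebra curve $c$ can be reconstructed as a group curve $\eta$. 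It is precisely this last integration step, together with the realizability of variations, that the regularity hypothesis is designed to supply; the identities coming from the Maurer-Cartan form and from the definition of $[,]^{*}$ are purely formal and already in hand.
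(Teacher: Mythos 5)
Your proposal is correct and takes essentially the same route as the paper's proof: both compute the first variation of the energy, use the Maurer--Cartan identity of Corollary \ref{Lemma-R-M-UVUV} to obtain $\partial_{s}c=\partial_{t}w+[c,w]$, and conclude via integration by parts together with the defining property $\langle c,[c,w]\rangle=\langle[c,c]^{*},w\rangle$. Your pullback formulation $d(\Phi^{*}\omega)=\Phi^{*}(d\omega)$ is merely a cleaner phrasing of the same key identity, and your explicit appeals to the fundamental lemma of the calculus of variations and to Milnor's regularity for the converse direction only spell out steps the paper leaves implicit.
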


\begin{proof}
Consider the energy function of a curve $\eta$ on $G$: 
\begin{eqnarray*}
	E(\eta) 
	&=& 
	\frac{1}{2}\int_{0}^{t} 
	\langle \dot{\eta},\dot{\eta}\rangle dt\\
	&=&
	\frac{1}{2}\int_{0}^{t}
	|| r_{\eta^{-1}}(\dot{\eta})|| dt.
	\end{eqnarray*}
	For a proper variation $\eta_{s}$ of $\eta$,
	define $c_{s}(t) := r_{\eta^{-1}}(\dot{\eta})
	=\omega(\dot{\eta})\in {\mathfrak g}$,
	$X_{s}(t) := \partial_{s}\gamma_{s}(t)$,
	and $x_{s}(t) := \omega(X_{s})$
	where
	$\omega$ is the right-invariant Maurer-Cartan form.
	Then,
the first variation is
\begin{eqnarray*}
	\partial_{s}E(\eta_{s})
	&=&
	\int_{0}^{t}
	\langle c_{s},\partial_{s}c_{s}\rangle dt.
\end{eqnarray*}
Corollary \ref{Lemma-R-M-UVUV} implies
 \begin{eqnarray*}
	\partial_{s}c_{s} &=& X_{s}(\omega(\dot{\eta}))
	\\
	&=& 
	[\omega(\dot{\eta}),\omega(X_{s})]
	+
	\dot{\eta}(\omega(X_{s}))
	\\
	&=&
	[c_{s},x_{s}]+\partial_{t}x_{s}.
\end{eqnarray*}
	Thus,
	\begin{eqnarray*}
		\partial_{s}E(\eta_{s})
		&=&
		\int_{0}^{t}
		\langle c_{s}, [c_{s},x_{s}]+\partial_{t}x_{s}\rangle dt
		\\
		&=&
		\int_{0}^{t}
		\langle [c_{s},c_{s}]^{*}-\partial_{t}c_{s}, x_{s}\rangle dt.
	\end{eqnarray*}
	This completes the proof.
\end{proof}

	\begin{Definition}
		We define the Euler-Arnol'd equation of $G$ by
	\begin{eqnarray*}
		u_{t} = [u,u]^{*}.
	\end{eqnarray*}
	for $u:[0,t_{0}]\to {\mathfrak g}$. In other words 
	\begin{eqnarray*}
	u_{t} = -\operatorname{ad}_{u}^{*}u,
	\end{eqnarray*}
	where $\operatorname{ad}_{u}v=-[u,v]$ and 
	$\langle \operatorname{ad}_{u}v,w\rangle
	=
	\langle v,\operatorname{ad}_{u}^{*}w\rangle$.
	\end{Definition}

	\subsubsection{Euler-Arnol'd equation of ${\mathcal D}_{\mu}(M)$}

	Let  ${\mathcal D}_{\mu}(M)$ be the group of volume-preserving $C^{\infty}$-diffeomorphisms of a $n$-dimensional compact Riemannian manifold $(M,g)$ with right-invariant Riemannian metric
$$
\langle X,Y\rangle:=\int_{M} g(X,Y)\mu,
$$
where $X,Y\in{\mathfrak g}:=T_{e}{\mathcal D}_{\mu}(M)$.
Let $P:{\mathfrak X}(M)\to {\mathfrak g}$ be the projection to the divergence-free part, where ${\mathfrak X}(M)$ is the space of vector fields.

\begin{Lemma}
	For $X,Y\in{\mathfrak g}$, we have
	$$
	\nabla_{X^{R}}Y^{R} = - (P(\nabla^{M}_{X}Y))^{R},
	$$
	where $\nabla^{M}$ is the Levi-Civita connection on $M$.
	\begin{proof}
	The Koszul formula and the right-invariance imply
		\begin{eqnarray*}
			2\langle \nabla_{X^{R}}Y^{R},Z^{R}\rangle
			&=&
			\langle [X^{R},Y^{R}], Z^{R}\rangle
			-
			\langle [X^{R},Z^{R}], Y^{R}\rangle
			-
			\langle [Y^{R},Z^{R}], X^{R}\rangle
			\\
			&=&
			-\langle [X,Y]^{R}, Z^{R}\rangle
			+
			\langle [X,Z]^{R}, Y^{R}\rangle
			+
			\langle [Y,Z]^{R}, X^{R}\rangle
			\\
			&=&
			-\langle [X,Y], Z\rangle
			+
			\langle [X,Z], Y\rangle
			+
			\langle [Y,Z], X\rangle
		\end{eqnarray*}
		This completes the proof by
		the Koszul formula on $M$
		and the right-invariance of $\langle,\rangle$.
		\end{proof}
\end{Lemma}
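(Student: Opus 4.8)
The plan is to compute the Levi-Civita connection of $G=\mathcal{D}_{\mu}(M)$ on right-invariant fields via the Koszul formula and then transport the answer down to $M$. First I would write the Koszul formula for $2\langle \nabla_{X^{R}}Y^{R},Z^{R}\rangle$; since the metric is right-invariant, each inner product of right-invariant fields is a constant function on $G$, so the three metric-derivative terms $X^{R}\langle Y^{R},Z^{R}\rangle$, etc.\ drop out and only the three bracket terms survive. Applying the sign convention \eqref{sign-bracket}, namely $[X^{R},Y^{R}]=-[X,Y]^{R}$, together with the right-invariance identity $\langle A^{R},B^{R}\rangle=\langle A,B\rangle$, leaves the purely algebraic expression
\[
2\langle \nabla_{X^{R}}Y^{R},Z^{R}\rangle
=
-\langle [X,Y],Z\rangle
+\langle [X,Z],Y\rangle
+\langle [Y,Z],X\rangle,
\]
in which the pairings are now the $L^{2}$-products \eqref{L^{2}} on $\mathfrak{g}$.

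The second step is to recognize the right-hand side as $-2\langle \nabla^{M}_{X}Y,Z\rangle$. For this I would write the pointwise Koszul formula for $\nabla^{M}$ on $M$ and integrate it against $\mu$: the three bracket terms reproduce (up to sign) the expression above, while the three remaining terms each have the shape $\int_{M}(Vf)\,\mu$ with $V\in\{X,Y,Z\}$ divergence-free and $f$ one of the functions $g(Y,Z),g(X,Z),g(X,Y)$. The key point, which I expect to be the real content of the argument, is that every such integral vanishes: because $\operatorname{div}V=0$, the product rule gives $\operatorname{div}(fV)=V(f)$, and the divergence theorem on the closed manifold $M$ yields $\int_{M}V(f)\,\mu=0$. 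In other words, it is precisely the divergence-freeness of $X,Y,Z$ that makes the metric-derivative terms, eliminated on the group side by right-invariance, also disappear on the manifold side, so that the two Koszul formulas match.

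Combining the two steps gives $\langle \nabla_{X^{R}}Y^{R},Z^{R}\rangle=-\langle \nabla^{M}_{X}Y,Z\rangle$ for all $Z\in\mathfrak{g}$. Since $P$ is the orthogonal projection onto $\mathfrak{g}$, we may replace $\nabla^{M}_{X}Y$ by $P(\nabla^{M}_{X}Y)$ inside the pairing with $Z\in\mathfrak{g}$, and nondegeneracy of the $L^{2}$-product on $\mathfrak{g}$ then forces $(\nabla_{X^{R}}Y^{R})(e)=-P(\nabla^{M}_{X}Y)$. Finally, because right-translations are isometries of the right-invariant metric they preserve the Levi-Civita connection, so $\nabla_{X^{R}}Y^{R}$ is itself right-invariant and is therefore determined by its value at $e$; this upgrades the identity at the identity element to the claimed equality $\nabla_{X^{R}}Y^{R}=-(P(\nabla^{M}_{X}Y))^{R}$ of vector fields on $G$.
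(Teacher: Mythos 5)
Your proof is correct and takes essentially the same route as the paper: the Koszul formula on $G$ with right-invariance killing the metric-derivative terms, the sign convention $[X^{R},Y^{R}]=-[X,Y]^{R}$, and then matching the resulting algebraic expression with the integrated Koszul formula on $M$. The paper compresses the second half into one sentence, whereas you make explicit the points it leaves implicit — that divergence-freeness of $X,Y,Z$ kills the terms $\int_{M}V(f)\,\mu$ on the manifold side, that pairing against $Z\in\mathfrak{g}$ only sees the projection $P(\nabla^{M}_{X}Y)$, and that right-invariance of the connection upgrades the identity at $e$ to an equality of vector fields on $G$.
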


\begin{Lemma}
For $X\in{\mathfrak g}$, we have
	$$
	[X,X]^{*}=-P(\nabla^{M}_{X}X).
	$$
\end{Lemma}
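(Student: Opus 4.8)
The plan is to combine the two preceding lemmas with the defining property of $[,]^{*}$, so that the statement reduces to a pairing identity tested against all $Z\in{\mathfrak g}$. First I would unwind the definition of $[,]^{*}$ in the case $Y=X$, which gives
\[
\langle [X,X]^{*}, Z\rangle = \langle X, [X,Z]\rangle
\]
for every $Z\in{\mathfrak g}$. Since the $L^{2}$-metric is nondegenerate on ${\mathfrak g}$ and $P(\nabla^{M}_{X}X)\in{\mathfrak g}$, it suffices to prove that $-P(\nabla^{M}_{X}X)$ obeys the same pairing, i.e.\ that $\langle -P(\nabla^{M}_{X}X), Z\rangle = \langle X, [X,Z]\rangle$ for all such $Z$.

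Next I would specialize the Koszul computation already carried out in the previous lemma to $Y=X$. That computation yields
\[
2\langle \nabla_{X^{R}}Y^{R}, Z^{R}\rangle = -\langle [X,Y],Z\rangle + \langle [X,Z],Y\rangle + \langle [Y,Z],X\rangle,
\]
and setting $Y=X$ together with $[X,X]=0$ collapses the right-hand side to $2\langle [X,Z],X\rangle = 2\langle X,[X,Z]\rangle$. Hence $\langle \nabla_{X^{R}}X^{R}, Z^{R}\rangle = \langle X,[X,Z]\rangle$.

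Finally I would invoke the previous lemma $\nabla_{X^{R}}X^{R} = -(P(\nabla^{M}_{X}X))^{R}$ together with the right-invariance of $\langle,\rangle$ to rewrite the left-hand side of the last display as $\langle -P(\nabla^{M}_{X}X), Z\rangle$. Comparing this with the identity from the first paragraph gives $\langle [X,X]^{*}, Z\rangle = \langle -P(\nabla^{M}_{X}X), Z\rangle$ for all $Z\in{\mathfrak g}$, whence the claimed equality by nondegeneracy. I do not expect any genuine obstacle: both sides already lie in ${\mathfrak g}$ because the projection $P$ enforces divergence-freeness, so the only point to verify is that the test vectors $Z$ range over enough of ${\mathfrak g}$ to separate its points, which is precisely the nondegeneracy of the metric. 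The one subtlety worth flagging is the consistent use of the sign convention \eqref{sign-bracket} inside the Koszul formula, since that is where an erroneous sign would most easily creep in.
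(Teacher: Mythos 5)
Your proposal is correct and follows essentially the same route as the paper: both reduce the identity to the pairing $\langle [X,X]^{*},Z\rangle=\langle X,[X,Z]\rangle$ tested against $Z\in{\mathfrak g}$, evaluate $\langle\nabla_{X^{R}}X^{R},Z^{R}\rangle$ via the Koszul formula with the sign convention \eqref{sign-bracket}, and then invoke the preceding lemma $\nabla_{X^{R}}X^{R}=-(P(\nabla^{M}_{X}X))^{R}$ together with right-invariance and nondegeneracy. The only difference is expository: you make the final appeal to nondegeneracy explicit, which the paper leaves implicit.
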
 
	
\begin{proof}

By the Koszul formula, we have
\begin{eqnarray*}
	2\langle \nabla^{M}_{X}X,Z\rangle 
	&=&
	-2
	\langle \nabla_{X^{R}}X^{R},Z^{R}\rangle
	\\
	&=&
	-
	\langle [X^{R},X^{R}],Z^{R}\rangle
	+
	\langle [X^{R},Z^{R}],X^{R}\rangle
	+
	\langle [X^{R},Z^{R}],X^{R}\rangle
	\\
	&=&
	2\langle X^{R},[X^{R},Z^{R}]\rangle
	\\
	&=&
	-2
	\langle X^{R},[X,Z]^{R}\rangle
	\\
	&=&
	-2\langle X,[X,Z]\rangle.
\end{eqnarray*}
This completes the proof.
\end{proof}

\begin{Corollary}
The Euler-Arnol'd equation of ${\mathcal D}_{\mu}(M)$ is
\begin{eqnarray*}
	u_{t} = -\nabla^{M}_{u}u - \operatorname{grad}p.
\end{eqnarray*}
\end{Corollary}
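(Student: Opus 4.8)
The plan is to chain the definition of the Euler--Arnol'd equation together with the two lemmas just established, and then to interpret the failure of $\nabla^{M}_{u}u$ to be divergence-free as a pressure-gradient term. First I would apply the Euler--Arnol'd equation of $G={\mathcal D}_{\mu}(M)$, namely $u_{t}=[u,u]^{*}$, and substitute into it the identity $[u,u]^{*}=-P(\nabla^{M}_{u}u)$ proved in the preceding lemma. This gives $u_{t}=-P(\nabla^{M}_{u}u)$ at once, reducing the corollary to the task of identifying the projected term $P(\nabla^{M}_{u}u)$ in terms of $\nabla^{M}_{u}u$ and a gradient.

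The remaining step is the Helmholtz--Hodge decomposition of vector fields: the space ${\mathfrak X}(M)$ splits $L^{2}$-orthogonally as ${\mathfrak g}\oplus\operatorname{grad}(C^{\infty}(M))$, so that the projection $P$ onto the divergence-free part satisfies $V-P(V)=\operatorname{grad}q$ for some function $q$ depending on $V$. Applying this to $V=\nabla^{M}_{u}u$ yields $P(\nabla^{M}_{u}u)=\nabla^{M}_{u}u-\operatorname{grad}q$, and hence $u_{t}=-\nabla^{M}_{u}u+\operatorname{grad}q$. Setting $p:=-q$ then produces exactly the stated form $u_{t}=-\nabla^{M}_{u}u-\operatorname{grad}p$.

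I expect the only genuine point requiring care to be the Hodge decomposition itself, namely the fact that the $L^{2}$-orthogonal complement of the divergence-free fields is precisely the space of gradients; this is where the definition of $P$ as the divergence-free projection is used, and it is what converts the abstract term $P(\nabla^{M}_{u}u)$ into the classical pressure gradient. Everything else in the argument is a direct substitution, so no further estimates are needed.
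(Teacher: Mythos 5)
Your proposal is correct and follows exactly the route the paper intends: the paper states this corollary without proof as an immediate consequence of the definition $u_{t}=[u,u]^{*}$ and the preceding lemma $[u,u]^{*}=-P(\nabla^{M}_{u}u)$, with the pressure gradient arising from the Helmholtz--Hodge decomposition just as you describe. Your identification of the Hodge decomposition (gradients as the $L^{2}$-orthogonal complement of the divergence-free fields on a compact manifold) as the only nontrivial ingredient is exactly right.
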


\subsubsection{Lichnerowicz 2-cocycle}
Let ${\mathcal D}_{\mu}(M)$ be the groups of volume-preserving diffeomorphisms of a compact $n$-dimensional Riemannian manifold $M$,
and ${\mathfrak g}:=T_{e}{\mathcal D}_{\mu}(M)$,
which is identified with the space of divergence-free vector fields on $M$.
	For a closed 2-form $\eta$, define a skew-symmetric bilinear form $\Omega$ on the Lie algebra ${\mathfrak g}$ by
	$$
	\Omega(X,Y):=\int_{M}\eta(X,Y).
	$$
	\begin{Lemma}
		The form $\Omega$ defines a 2-cocycle on ${\mathfrak g}$,
		namely,
		it satisfies the ``Jacobi identity''
		\begin{eqnarray*}
			\Omega([X,Y],Z)+
		\Omega([Z,X],Y)+
		\Omega([Y,Z],X)
		=0
		\end{eqnarray*}
		for any $X,Y,Z\in{\mathfrak g}$.
	\end{Lemma}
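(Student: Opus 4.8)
The plan is to reduce the cocycle identity to the closedness of $\eta$ via the invariant (Cartan) formula for the exterior derivative of a $2$-form, and then to dispose of the resulting ``derivative terms'' using the divergence-free hypothesis together with Stokes' theorem. First I would write down the coordinate-free formula for $d\eta$ evaluated on three vector fields $X,Y,Z\in{\mathfrak g}$:
\begin{equation*}
d\eta(X,Y,Z)
= X\eta(Y,Z)-Y\eta(X,Z)+Z\eta(X,Y)
-\eta([X,Y],Z)+\eta([X,Z],Y)-\eta([Y,Z],X).
\end{equation*}
Using the skew-symmetry of $\eta$ to replace $\eta([X,Z],Y)$ by $-\eta([Z,X],Y)$, the three bracket terms assemble into exactly the cyclic sum appearing in the statement. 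Since $\eta$ is closed, the left-hand side vanishes, so that
\begin{equation*}
\eta([X,Y],Z)+\eta([Z,X],Y)+\eta([Y,Z],X)
= X\eta(Y,Z)-Y\eta(X,Z)+Z\eta(X,Y).
\end{equation*}

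Next I would integrate this pointwise identity against the volume form $\mu$. The left-hand side becomes precisely $\Omega([X,Y],Z)+\Omega([Z,X],Y)+\Omega([Y,Z],X)$, so it remains to show that each term on the right integrates to zero. Each such term has the form $\int_{M}W(f)\,\mu$, where $W\in\{X,Y,Z\}$ is divergence-free and $f$ is one of the functions $\eta(\cdot,\cdot)$. Here the divergence-free hypothesis enters decisively: since $\mathcal{L}_{W}\mu=(\operatorname{div}W)\mu=0$, we have $(\mathcal{L}_{W}f)\mu=\mathcal{L}_{W}(f\mu)=d\,\iota_{W}(f\mu)$, the last equality because $f\mu$ is a top-degree form and hence closed. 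Stokes' theorem on the compact boundaryless manifold $M$ then gives $\int_{M}W(f)\,\mu=0$, so the entire right-hand side vanishes and the cocycle identity follows.

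This is essentially a routine computation, so I do not anticipate a serious obstacle; the two points requiring care are the sign bookkeeping when reorganizing the bracket terms of the Cartan formula into the cyclic sum, and the verification that the derivative terms integrate to zero. The latter is the only place where the hypotheses are genuinely used, and it is worth emphasizing the division of labor: closedness of $\eta$ is exactly what kills the Cartan-formula step, while divergence-freeness of the elements of ${\mathfrak g}$ (equivalently, invariance of $\mu$) is exactly what makes the integrated derivative terms drop out via Stokes. If one wished to avoid the Lie-derivative manipulation, the identity $\int_{M}W(f)\,\mu=0$ for divergence-free $W$ could equally be justified by integration by parts against $\operatorname{div}W=0$ in the chosen coordinates, but the Cartan/Stokes route is the cleanest and most intrinsic.
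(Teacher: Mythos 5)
Your proof is correct, but it takes a genuinely different route from the paper. The paper does not apply the Cartan formula to $\eta$ itself: it first writes $\eta=d\alpha$ for a one-form $\alpha$, applies the Cartan formula for the exterior derivative of the \emph{one-form} $\alpha$ to each term $\Omega([X,Y],Z)=\int_M d\alpha([X,Y],Z)\,\mu$, kills the derivative terms by the same divergence-freeness mechanism you use (the paper phrases it as $\int_M X(f)\mu=\int_M g(\operatorname{grad}f,X)\mu=0$, i.e.\ $L^2$-orthogonality of gradients and divergence-free fields, rather than your Lie-derivative/Stokes computation), and then concludes by invoking the Jacobi identity for the Lie bracket of vector fields applied to the surviving terms $\alpha([[X,Y],Z])+\alpha([[Z,X],Y])+\alpha([[Y,Z],X])$. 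Your argument instead uses closedness of $\eta$ directly, via the Cartan formula for the $2$-form, so the cyclic bracket sum appears immediately and no Jacobi identity is needed. This buys you something real: the paper's step ``$\eta$ closed $\Rightarrow$ $\eta=d\alpha$'' is actually exactness, which does not follow from closedness on manifolds with $H^{2}(M,\mathbb R)\neq 0$ --- precisely the situation of $M_{s}\cong S^{2}$ (it is harmless in the paper's application only because there $\eta=z\mu$ has total integral zero, hence is exact). Your proof is valid for an arbitrary closed $2$-form and is in that sense the more robust argument; the paper's proof trades this generality for the simpler one-form Cartan formula plus the Lie-algebra Jacobi identity. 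One cosmetic remark: replacing $\eta([X,Z],Y)$ by $-\eta([Z,X],Y)$ uses antisymmetry of the Lie bracket (and linearity of $\eta$ in its first slot), not skew-symmetry of $\eta$; this does not affect the correctness of the step.
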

	\begin{proof}
		Because $\eta$ is closed,
there exists a one-form $\alpha$ on $M$ such that
$\eta=d\alpha$.
Recall the formula of the exterior derivative of 1-form:
\begin{equation}
	d\alpha(X,Y)
	=
	X(\alpha(Y))
	-
	Y(\alpha(X))
	-
	\alpha([X,Y]).
\end{equation}
This implies
	\begin{eqnarray*}
		\Omega([X,Y],Z)
		&=&
		\int_{M}
		\left(
		[X,Y](\alpha(Z))
	-
	Z(\alpha([X,Y]))
	-
	\alpha([[X,Y],Z])
		\right)\mu
		\\
		&=&
		\int_{M}
		\alpha([[X,Y],Z])
		\mu.
	\end{eqnarray*}
	Note that the second equality follows from  
	$$
	\int_{M}X(f)\mu =
	\int_{M}g(\operatorname{grad}f,X)\mu
	=0
	$$
	for any $f\in C^{\infty}(M)$ and a divergence-free vector field $X$.
	Thus,
	we have
\begin{eqnarray*}
	&&\Omega([X,Y],Z)+
		\Omega([Z,X],Y)+
		\Omega([Y,Z],X)
		\\
	&=&
	\int_{M}
	\left(
		\alpha([[X,Y],Z]
		+
		[[Z,X],Y]
		+
		[[Y,Z],X])
		\right)
		\mu
		\\
		&=&
		0
\end{eqnarray*}
by the Jacobi identity of $[,]$.
This completes the proof.
\end{proof}

	This Lemma allows us to
	endow $\widehat{\mathfrak g}:={\mathfrak g}\oplus {\mathbb R}$
	with a Lie algebra structure defined by
	$$
	[(X,a),(Y,b)]:=([X,Y],\Omega(X,Y)).
	$$
	Take $B\in C^{\infty}(\wedge^{n-2}TM)$ satisfying 
	$\iota_{B}\mu =\eta$.
	Then, we have
	$$
	\Omega(X,Y) 
	=
	\int_{M}\iota_{B}\mu(X,Y) 
	=
	\int_{M}\mu(B,X,Y)
	=
	\int_{M} g(B\times X,Y)
	=
	\langle P(B\times X),Y\rangle,
	$$
	where $B\times X =\star (B\wedge X)$
	(see \eqref{def-cross} and Lemma \ref{Lemma-cross-product})
	and
	$P:{\mathfrak X}(M)\to {\mathfrak g}$ is the projection to the divergence-free part.
	 
	\begin{Lemma}
	For $(X,a),(Y,b)\in\widehat{\mathfrak g}$, we have
		$$
		[(X,a),(Y,b)]^{*}
		=
		([X,Y]^{*}+bP(B\times X),0).
		$$
	\end{Lemma}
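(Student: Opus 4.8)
The plan is to compute $[(X,a),(Y,b)]^{*}$ directly from its defining property, testing against an arbitrary element $(Z,c)\in\widehat{\mathfrak g}$ and pairing with the extended inner product \eqref{CEI}. First I would write down the defining identity
\begin{eqnarray*}
\langle [(X,a),(Y,b)]^{*},(Z,c)\rangle
=
\langle (Y,b),[(X,a),(Z,c)]\rangle,
\end{eqnarray*}
and expand the right-hand side. Using the extended bracket \eqref{CEL} we have $[(X,a),(Z,c)]=([X,Z],\Omega(X,Z))$, so \eqref{CEI} gives that the right-hand side equals $\langle Y,[X,Z]\rangle+b\,\Omega(X,Z)$.

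Next I would rewrite both terms as pairings on $\mathfrak g$ alone. By the definition of $[,]^{*}$ on $\mathfrak g$, the first term is $\langle [X,Y]^{*},Z\rangle$, and by \eqref{omega-P} the second is $b\langle P(B\times X),Z\rangle$. Combining them, the right-hand side becomes $\langle [X,Y]^{*}+bP(B\times X),\,Z\rangle$, which is visibly independent of the central variable $c$.

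Finally I would write $[(X,a),(Y,b)]^{*}=(W,d)$ for unknown $W\in\mathfrak g$ and $d\in\mathbb R$, so that pairing with $(Z,c)$ yields $\langle W,Z\rangle+dc$. Matching this against the $c$-independent expression above, for all $(Z,c)$, forces $d=0$ (taking $Z=0$ and varying $c$) and then $W=[X,Y]^{*}+bP(B\times X)$ (by nondegeneracy of $\langle,\rangle$ on $\mathfrak g$), which is exactly the asserted formula. The computation is essentially mechanical; the only point worth isolating is the vanishing of the central component $d$, which expresses that $\Omega$ enters as a \emph{central} cocycle, so the adjoint bracket never produces a nontrivial $\mathbb R$-direction that a test element's central slot could pair against. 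The one standing hypothesis I would rely on is the assumed existence of $[,]^{*}$ on both $\mathfrak g$ and $\widehat{\mathfrak g}$, which guarantees that $W$ and $d$ are well defined.
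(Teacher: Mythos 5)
Your proposal is correct and follows essentially the same route as the paper: expand the defining identity $\langle [(X,a),(Y,b)]^{*},(Z,c)\rangle=\langle (Y,b),[(X,a),(Z,c)]\rangle$ using \eqref{CEL} and \eqref{CEI}, then convert $\langle Y,[X,Z]\rangle+b\,\Omega(X,Z)$ into $\langle [X,Y]^{*}+bP(B\times X),Z\rangle$ via the definition of $[,]^{*}$ and \eqref{omega-P}. Your explicit final step (forcing the central component to vanish by varying $c$ and invoking nondegeneracy) is left implicit in the paper's proof but is a welcome clarification, not a deviation.
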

	\begin{proof}
		\begin{eqnarray*}
			\langle [(X,a),(Y,b)]^{*},(Z,c)\rangle
			&=&
			\langle (Y,b),[(X,a),(Z,c)]\rangle
			\\
			&=&
			\langle (Y,b),([X,Z],\Omega(X,Z)\rangle
			\\
			&=&
			\langle Y,[X,Z]\rangle + b \Omega(X,Z)
			\\
			&=&
			\langle [X,Y]^{*},Z\rangle +\langle bP(B\times X),Z\rangle.
		\end{eqnarray*}
		This completes the proof.
		\end{proof}

	Thus, we have 
	\begin{Theorem}
		The Euler-Arnol'd equation of $\widehat{\mathfrak g}$
		is
		\begin{eqnarray*}
			u_{t} = [u,u]^{*}+aP(B\times u),
		\end{eqnarray*}
		or equivalently,
		\begin{eqnarray*}
			u_{t} = -\nabla_{u}u+a(B\times u)-\operatorname{grad}p.
		\end{eqnarray*}
	\end{Theorem}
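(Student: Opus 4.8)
The plan is to read the equation off directly from the two immediately preceding lemmas, so essentially no new computation is required. First I would unwind the definition of the Euler-Arnol'd equation: a solution on $\widehat{\mathfrak g}={\mathfrak g}\oplus{\mathbb R}$ is a curve $u=(X,a)\colon[0,t_{0}]\to\widehat{\mathfrak g}$ satisfying $u_{t}=[u,u]^{*}$. To evaluate the right-hand side I would invoke the preceding Lemma, which gives $[(X,a),(Y,b)]^{*}=([X,Y]^{*}+bP(B\times X),0)$, and specialize it to $Y=X$, $b=a$. This yields $[u,u]^{*}=([X,X]^{*}+aP(B\times X),0)$.

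Comparing components, the equation $u_{t}=[u,u]^{*}$ splits into $X_{t}=[X,X]^{*}+aP(B\times X)$ together with $a_{t}=0$. The second relation says that the central parameter $a$ is constant along the flow, which justifies treating it as a fixed scalar; writing $u$ for the ${\mathfrak g}$-component $X$ then gives exactly the first displayed equation $u_{t}=[u,u]^{*}+aP(B\times u)$.

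For the equivalent form I would substitute the identity $[X,X]^{*}=-P(\nabla^{M}_{X}X)$ from the earlier Lemma, obtaining $u_{t}=-P(\nabla_{u}u)+aP(B\times u)$. The remaining step is to remove the projection $P$ by means of the Hodge--Helmholtz decomposition ${\mathfrak X}(M)={\mathfrak g}\oplus\operatorname{grad}(C^{\infty}(M))$: each vector field differs from its divergence-free projection by a gradient, so $P(\nabla_{u}u)=\nabla_{u}u-\operatorname{grad}q_{1}$ and $P(B\times u)=B\times u-\operatorname{grad}q_{2}$ for suitable functions $q_{1},q_{2}$. Collecting the two gradient terms into a single pressure $\operatorname{grad}p:=\operatorname{grad}(-q_{1}+aq_{2})$ then produces $u_{t}=-\nabla_{u}u+a(B\times u)-\operatorname{grad}p$, as claimed.

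I do not expect a genuine obstacle here, since the statement is a formal corollary of the two lemmas above; the only points requiring a little care are the harmless notational overloading of $u$ (the full curve in $\widehat{\mathfrak g}$ versus its ${\mathfrak g}$-component) and the appeal to the Helmholtz decomposition that lets one trade the projection $P$ for an undetermined gradient, which is precisely where the pressure $p$ enters.
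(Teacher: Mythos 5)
Your proposal is correct and follows exactly the route the paper intends: the paper gives no explicit proof, introducing the theorem with ``Thus, we have'' as an immediate consequence of the lemma $[(X,a),(Y,b)]^{*}=([X,Y]^{*}+bP(B\times X),0)$ specialized to $(Y,b)=(X,a)$, the earlier identity $[X,X]^{*}=-P(\nabla^{M}_{X}X)$, and the Helmholtz decomposition to absorb the projections into a pressure gradient. Your write-up simply fills in these implicit steps (including the observation that the central component is constant), so there is nothing to correct.
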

	
	\subsubsection{Formulae on Riemannian manifold}

In this subsection, for the convenience of readers, we briefly summarize formulae on Riemannian manifold.
	Let $(M,g)$ be a $n$-dimensional Riemannian manifold
and
$\mu$ the volume form.
Write ${\mathfrak X}^{p}(M)$ for the space of $p$-vector fields on $M$,
and
${\mathcal E}^{q}(M)$ for the space of 
$q$-forms on $M$.

\begin{Definition}
\label{Def-musical}
	Define 
	$\flat: {\mathfrak X}^{1}(M)\to {\mathcal E}^{1}(M)$
	and
	$\sharp:{\mathcal E}^{1}(M)\to {\mathfrak X}^{1}(M)$
	by
	\begin{alignat*}{2}
		V^{\flat}:=&
		g(V,\cdot)
		\quad &&\in {\mathcal E}^{1}(M),
		\\
		g(\alpha^{\sharp},\cdot)=&
		\alpha
		&&\in {\mathcal E}^{1}(M)
	\end{alignat*}
	for $V\in{\mathfrak X}^{1}(M)$
	and
	$\alpha\in{\mathcal E}^{1}(M)$.
	We extend these isomorphisms to
	$\flat: {\mathfrak X}^{p}(M)\to {\mathcal E}^{p}(M)$
	and
	$\sharp:{\mathcal E}^{p}(M)\to {\mathfrak X}^{p}(M)$
	for any $p\in{\mathbb Z}$.
\end{Definition}

\begin{Definition}
\label{langle-rangle}
	Define 
	$\langle,\rangle_{\mathfrak X}:
	{\mathfrak X}^{p}(M)\otimes_{C^{\infty}(M)}
	{\mathfrak X}^{p}(M)
	\to 
	C^{\infty}(M)$
	by
	\begin{equation*}
		\langle V,
		W\rangle_{\mathfrak X}
		:=
		\iota_{V}(W^{\flat}),
	\end{equation*}
	where
	$\iota$ is the interior derivative.
	Similarly,
	define 
	$\langle,\rangle_{\mathcal E}:
	{\mathcal E}^{p}(M)\otimes_{C^{\infty}(M)}
	{\mathcal E}^{p}(M)
	\to 
	C^{\infty}(M)$
	by
	\begin{equation*}
		\langle\alpha,
		\beta\rangle_{\mathcal E}
		:=
		\iota_{\alpha^{\sharp}}
		(\beta).
	\end{equation*}
\end{Definition}

\begin{Lemma}
	Let $V,W\in{\mathfrak X}^{1}(M)$.
	Then,
	we have
	\begin{equation*}
		\langle V,W
		\rangle_{\mathfrak X}
		=
		g(V,W).
	\end{equation*}
\end{Lemma}

\begin{proof}
By Definition \ref{Def-musical}
and \ref{langle-rangle},
we have
	\begin{equation*}
		\langle V,W
		\rangle_{\mathfrak X}
		=
		\iota_{V}(W^{\flat})
		=
		W^{\flat}(V)
		=
		g(W,V).
	\end{equation*}
	This completes the proof.
\end{proof}

\begin{Definition}
\label{Def-Hodge}
	We define the Hodge star operator
	$\star:{\mathfrak X}^{p}(M)
	\to
	{\mathfrak X}^{n-p}(M)$
	and
	$\star:{\mathcal E}^{p}(M)
	\to
	{\mathcal E}^{n-p}(M)$
	by
	\begin{alignat}{1}
	V\wedge \star W=&
		\langle V,W\rangle_{\mathfrak X}  \mu^{\sharp}
		\qquad
		\text{
		for any
		}
		V\in{\mathfrak X}^{p}(M)
		\label{formula-Hodge-vect}
		\\
		\alpha\wedge (\star \beta)
		=&
		\langle \alpha,\beta\rangle_{\mathcal E} \mu
		\qquad
		\text{
		for any
		}
		\alpha\in{\mathcal E}^{p}(M).
	\end{alignat}
\end{Definition}
Note that
\begin{equation}
\star^{2}\alpha=(-1)^{n-1}\alpha
\quad
\text{
for
}
\alpha
\in{\mathcal E}^{1}(M).
\label{Hodge=-1}
\end{equation}
Moreover,
applying $\mu$ to \eqref{formula-Hodge-vect},
we have
\begin{equation}
	\mu(V\wedge\star W)
	=\langle V,W\rangle_{\mathfrak X}.
	\label{formula-Hodge-mu}
\end{equation}

\begin{Definition}
\label{Definition-cross-product}
	For
	$X\in{\mathfrak X}^{1}(M)$
	and
	$B\in{\mathfrak X}^{n-2}(M)$,
	define
	\begin{equation}
		B\times X
		:=
		\star(B\wedge X)
		\quad
		\in{\mathfrak X}^{1}(M).
		\label{def-cross}
	\end{equation}
\end{Definition}

\begin{Lemma}
\label{Lemma-cross-product}
	For $X,Y\in{\mathfrak X}^{1}(M)$
	and
	$B\in{\mathfrak X}^{n-2}(M)$,
	we have
	\begin{equation*}
		\int_{M}
		\iota_{B}(\mu)(X,Y)\mu
		=
		\int_{M}
		g(B\times X,Y)\mu.
	\end{equation*}
\end{Lemma}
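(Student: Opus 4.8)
The plan is to prove the stronger \emph{pointwise} identity $\iota_{B}(\mu)(X,Y)=g(B\times X,Y)$; integrating it over $M$ against $\mu$ then yields the lemma at once. Both sides of this pointwise identity are $C^{\infty}(M)$-multilinear (tensorial) in $B$, $X$ and $Y$, so it suffices to establish it at a single point, and one may even reduce to the case in which $B=B_{1}\wedge\cdots\wedge B_{n-2}$ is decomposable and $X,Y$ belong to an orthonormal frame. The key observation is that each side separately equals $\mu(B\wedge X\wedge Y)$, i.e.\ the volume form evaluated on the $n$-vector $B\wedge X\wedge Y\in{\mathfrak X}^{n}(M)$; once both sides are brought to this common form the identity is immediate.

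For the left-hand side I would simply unwind the interior product. Writing $B=B_{1}\wedge\cdots\wedge B_{n-2}$ and adopting the convention $\iota_{B}=\iota_{B_{n-2}}\cdots\iota_{B_{1}}$, repeated use of $(\iota_{Z}\omega)(\cdots)=\omega(Z,\cdots)$ gives $\iota_{B}(\mu)(X,Y)=\mu(B_{1},\dots,B_{n-2},X,Y)=\mu(B\wedge X\wedge Y)$ directly. This is nothing more than the defining relation between the interior product of a multivector with a top-degree form and evaluation of that form on the corresponding $n$-vector.

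For the right-hand side I would start from $g(B\times X,Y)=\langle Y,B\times X\rangle_{\mathfrak X}$, using the symmetry of $\langle,\rangle_{\mathfrak X}$ together with the preceding lemma identifying $\langle\cdot,\cdot\rangle_{\mathfrak X}$ with $g$ on $1$-vectors. Applying \eqref{formula-Hodge-mu} with $V=Y$ and $W=B\times X$ converts this into $\mu\bigl(Y\wedge\star(B\times X)\bigr)$. Since $B\times X=\star(B\wedge X)$ by \eqref{def-cross}, the factor $\star(B\times X)$ equals $\star\star(B\wedge X)$; here $B\wedge X$ has degree $n-1$, so by the standard value $\star^{2}=(-1)^{n-1}$ in that degree (consistent with \eqref{Hodge=-1}) we get $\star\star(B\wedge X)=(-1)^{n-1}(B\wedge X)$. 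Finally, reordering $Y\wedge B\wedge X=(-1)^{n-1}B\wedge X\wedge Y$ contributes a second factor $(-1)^{n-1}$, the two signs cancel, and $g(B\times X,Y)=\mu(B\wedge X\wedge Y)$, matching the left-hand side.

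The main obstacle is not conceptual but the consistent bookkeeping of signs: the ordering convention chosen for $\iota_{B}$ on a decomposable multivector, the value of $\star^{2}$ in degree $n-1$, and the sign picked up when permuting the wedge factors of $B\wedge X\wedge Y$ must all be tracked so that the net sign is exactly $+1$, as the stated equality (with no sign) requires. As an independent safeguard against a convention error, I would verify the pointwise identity directly in an orthonormal frame: with $B=e_{i_{1}}\wedge\cdots\wedge e_{i_{n-2}}$, $X=e_{j}$, $Y=e_{k}$, both sides vanish unless $\{i_{1},\dots,i_{n-2},j,k\}=\{1,\dots,n\}$, and in that case each reduces to the same permutation sign, confirming the equality and pinning down all signs unambiguously.
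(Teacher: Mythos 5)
Your proof is correct and takes essentially the same route as the paper's: both reduce the two sides to $\mu(B\wedge X\wedge Y)=\mu(B,X,Y)$ by unwinding the interior product on one side, and on the other side using $g=\langle\,,\rangle_{\mathfrak X}$, the definition \eqref{def-cross} of $B\times X$, formula \eqref{formula-Hodge-mu}, the sign $\star^{2}=(-1)^{n-1}$ in degree $n-1$, and the cancelling permutation sign from $Y\wedge B\wedge X=(-1)^{n-1}B\wedge X\wedge Y$. The only cosmetic differences are that you state the identity pointwise before integrating (the paper performs the identical manipulations under the integral sign, where they are in fact pointwise anyway) and that you add an orthonormal-frame sanity check.
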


\begin{proof}
By the definition,
we have
	\begin{alignat*}{1}
		\int_{M}
		\iota_{B}(\mu)(X,Y)\mu
		=&
		\int_{M}
		\mu(B,X,Y)\mu.
		\end{alignat*}
On the other hand,
    \begin{alignat*}{1}
    	\int_{M}
    	g(B\times X,Y)\mu
      	=&
    	\int_{M}
    	\langle
    	Y,
    	B\times X
    	\rangle
    	\mu
    	\\
    	=&
    	\int_{M}
    	\langle
    	Y,
    	\star(B\wedge X)
    	\rangle
    	\mu
    	\\
    	=&
    	\int_{M}
    	\mu(
    	Y\wedge
    	\star^{2}(B\wedge X)
    	)
    	\mu
    	\\
    	=&
    	(-1)^{n-1}\int_{M}
		\mu(Y\wedge B\wedge X)\mu\\
		=&
    	\int_{M}
		\mu(B,X,Y)\mu.
    \end{alignat*}
    The third and fifth equalities follow from
    \eqref{Hodge=-1}    
    and
    \eqref{formula-Hodge-mu},
    respectively.
\end{proof}

\section{Appendix II}
\subsection{Misio{\l}ek curvature}
In this section,
we briefly recall the Misio{\l}ek curvature.
We refer to \cite{Misiolek-T,TTconj}.

\subsubsection{Curvature on group with right-invariant metric}
In this subsection,
we recall the formulae concerning to a group with right-invariant metric.
Thus, all contents in this subsection are known.

Let $G$ be a (possibly infinite-dimensional) Lie group with a right-invariant metric $\langle,\rangle$, and
${\mathfrak g}$ its Lie algebra.
Define
$$
\langle [X,Y],Z\rangle
=
\langle Y,[X,Z]^{*}\rangle.
$$

\begin{Lemma}
	For $X,Y\in{\mathfrak g}$,
	we have
	\begin{eqnarray*}
		2\nabla_{X^{R}}Y^{R}
		=
		(-[X,Y]+[X,Y]^{*}+[Y,X]^{*})^{R}
	\end{eqnarray*}
	where $\nabla$ is the right-invariant Levi-Civita connection on $G$. In particular,
	$$
	\nabla_{X^{R}}X^{R} = [X,X]^{*R}.
	$$
\end{Lemma}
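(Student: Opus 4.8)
The plan is to run the Koszul formula on $G$ and then collapse it to an identity at the identity element, using that $\langle,\rangle$ is right-invariant. Since both sides of the asserted equality are right-invariant vector fields, it is enough to test them against an arbitrary right-invariant field $Z^{R}$; that is, to show that $2\langle\nabla_{X^{R}}Y^{R},Z^{R}\rangle$ equals $\langle -[X,Y]+[X,Y]^{*}+[Y,X]^{*},Z\rangle$ for every $Z\in{\mathfrak g}$.

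First I would write the Koszul formula
$$
2\langle\nabla_{X^{R}}Y^{R},Z^{R}\rangle
=
X^{R}\langle Y^{R},Z^{R}\rangle
+Y^{R}\langle X^{R},Z^{R}\rangle
-Z^{R}\langle X^{R},Y^{R}\rangle
+\langle [X^{R},Y^{R}],Z^{R}\rangle
-\langle [X^{R},Z^{R}],Y^{R}\rangle
-\langle [Y^{R},Z^{R}],X^{R}\rangle.
$$
Because the metric is right-invariant, the three pairings $\langle Y^{R},Z^{R}\rangle$, $\langle X^{R},Z^{R}\rangle$, $\langle X^{R},Y^{R}\rangle$ are constant functions on $G$, so the first three (directional-derivative) terms vanish, exactly as in the Koszul computation already used earlier in the Appendix. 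Next I would convert each surviving bracket via the sign identity \eqref{sign-bracket}, $[X^{R},Y^{R}]=-[X,Y]^{R}$, together with the right-invariance $\langle V^{R},W^{R}\rangle=\langle V,W\rangle$, obtaining
$$
2\langle\nabla_{X^{R}}Y^{R},Z^{R}\rangle
=
-\langle [X,Y],Z\rangle
+\langle [X,Z],Y\rangle
+\langle [Y,Z],X\rangle.
$$

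Finally I would rewrite the last two terms with the defining relation $\langle [X,Y],Z\rangle=\langle Y,[X,Z]^{*}\rangle$, read as an identity in its three arguments: relabeling yields $\langle [X,Z],Y\rangle=\langle [X,Y]^{*},Z\rangle$ and $\langle [Y,Z],X\rangle=\langle [Y,X]^{*},Z\rangle$. Substituting gives $2\langle\nabla_{X^{R}}Y^{R},Z^{R}\rangle=\langle -[X,Y]+[X,Y]^{*}+[Y,X]^{*},Z\rangle$ for all $Z$, which is the claim; the ``in particular'' statement then follows by setting $Y=X$ and using $[X,X]=0$. The computation is entirely routine, so I expect the only real obstacle to be sign bookkeeping — applying \eqref{sign-bracket} consistently and correctly matching the present definition of $[,]^{*}$ with the equivalent adjoint form $\langle [X,Y]^{*},Z\rangle=\langle Y,[X,Z]\rangle$ used in Appendix A.
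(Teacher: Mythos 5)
Your proposal is correct and is essentially the paper's own proof: the Koszul formula with the derivative terms killed by right-invariance, conversion of brackets via $[X^{R},Y^{R}]=-[X,Y]^{R}$, and the adjoint relation $\langle [X,Y],Z\rangle=\langle Y,[X,Z]^{*}\rangle$ applied with relabeled arguments. Your sign bookkeeping is also fine --- the two formulations of $[,]^{*}$ in the two appendices are equivalent by symmetry of $\langle,\rangle$, exactly as you use them.
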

\begin{proof}
	The Koszul formula and the right-invariance imply
	\begin{eqnarray*}
		2\langle \nabla_{X^{R}}Y^{R},Z^{R}\rangle
		&=&
		\langle [X^{R},Y^{R}],Z^{R}\rangle
		-
		\langle [X^{R},Z^{R}],Y^{R} \rangle
		-
		\langle [Y^{R},Z^{R}],X^{R}\rangle
		\\
		&=&
		-
		\langle [X,Y]^{R},Z^{R}\rangle
		+
		\langle [X,Z]^{R},Y^{R} \rangle
		+
		\langle [Y,Z]^{R},X^{R}\rangle
		\\
		&=&
		-
		\langle [X,Y],Z\rangle
		+
		\langle [X,Z],Y \rangle
		+
		\langle [Y,Z],X\rangle
		\\
		&=&
		\langle -[X,Y]+[X,Y]^{*}+[Y,X]^{*},Z\rangle.
	\end{eqnarray*}
	This completes the proof.
\end{proof}
For the simplicity,
put 
$$
A^{\pm}(X,Y)
=
[X,Y]^{*} \pm [Y,X]^{*}.
$$
Define
$$
R(X^{R},Y^{R})
=
\nabla_{X^{R}}\nabla_{Y^{R}}
-
\nabla_{Y^{R}}\nabla_{X^{R}}
-
\nabla_{[X^{R},Y^{R}]}.
$$
\begin{Lemma}[{\cite[Thm. 2.1 in IV. \S2]{AK}}]
\label{curvature-R-inv}
	For $X,Y\in{\mathfrak g}$,
	we have
	\begin{eqnarray*}
		&&4\langle R(X^{R},Y^{R})Y^{R},X^{R}\rangle \\
		=&&
		-4\langle [Y,Y]^{*},[X,X]^{*}\rangle
		+||A^{+}(X,Y)||^{2}
		-3||[X,Y]||^{2}
		-2\langle [X,Y],A^{-}(X,Y)\rangle.
	\end{eqnarray*}
\end{Lemma}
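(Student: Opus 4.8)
The plan is to reduce the whole computation to bilinear algebra on ${\mathfrak g}$. By the preceding Lemma the Levi-Civita connection is right-invariant, so $\nabla_{X^{R}}Y^{R}=(B(X,Y))^{R}$ where $B(X,Y):=\frac{1}{2}(A^{+}(X,Y)-[X,Y])$. Since $\langle,\rangle$ and $\nabla$ are right-invariant, $R(X^{R},Y^{R})Y^{R}$ is again right-invariant and $\langle R(X^{R},Y^{R})Y^{R},X^{R}\rangle$ equals the value at $e$ of the corresponding Lie-algebra expression. Using $[X^{R},Y^{R}]=-[X,Y]^{R}$ from \eqref{sign-bracket}, I would first expand
\begin{equation*}
R(X^{R},Y^{R})Y^{R}=\big(B(X,B(Y,Y))-B(Y,B(X,Y))+B([X,Y],Y)\big)^{R}.
\end{equation*}

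Next I would pair this with $X$ and exploit two structural properties of $B$: metric compatibility of $\nabla$, which gives the skew-symmetry $\langle B(U,V),W\rangle=-\langle V,B(U,W)\rangle$, and the torsion-free identity $B(X,Y)-B(Y,X)=-[X,Y]$. Applying skew-symmetry to the first two summands yields
\begin{equation*}
\langle B(X,B(Y,Y)),X\rangle=-\langle B(Y,Y),B(X,X)\rangle,\qquad -\langle B(Y,B(X,Y)),X\rangle=\langle B(X,Y),B(Y,X)\rangle,
\end{equation*}
and since $B(X,X)=[X,X]^{*}$ and $B(Y,Y)=[Y,Y]^{*}$ the first of these is already $-\langle [Y,Y]^{*},[X,X]^{*}\rangle$, accounting (after multiplying by $4$) for the first term on the right-hand side. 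Substituting $B(X,Y)=\frac{1}{2}(A^{+}-[X,Y])$ and $B(Y,X)=\frac{1}{2}(A^{+}+[X,Y])$ into the second inner product collapses it to $\frac{1}{4}(||A^{+}||^{2}-||[X,Y]||^{2})$, where the two cross terms cancel by symmetry of $\langle,\rangle$.

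The remaining work is the third summand $\langle B([X,Y],Y),X\rangle$. Here I would expand $B$ once more and then repeatedly apply the defining relation $\langle [U,V],W\rangle=\langle V,[U,W]^{*}\rangle$ (equivalently $\langle U,[V,W]^{*}\rangle=\langle [V,U],W\rangle$) together with the antisymmetry of $[,]$ to rewrite each double-bracket pairing. The point is that this term reorganizes exactly into $-\frac{1}{2}||[X,Y]||^{2}-\frac{1}{2}\langle [X,Y],A^{-}(X,Y)\rangle$; recognizing the antisymmetric combination $A^{-}$ is the crux, and it emerges precisely from the difference $\langle [X,[X,Y]],Y\rangle-\langle [Y,[X,Y]],X\rangle$. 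Collecting all three summands and multiplying by $4$ then produces the claimed identity.

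The main obstacle is the last paragraph's bookkeeping: there are several double-bracket inner products, each of which can be pushed through the adjoint in more than one way, and one must track signs carefully and correctly separate the symmetric part (feeding $||A^{+}||^{2}$ and the $||[X,Y]||^{2}$ terms) from the antisymmetric part (feeding $\langle [X,Y],A^{-}\rangle$). There is no conceptual difficulty beyond the preceding Lemma and the two structural properties of $B$; the entire statement is a formal consequence of them, which is why it can be quoted from \cite{AK}.
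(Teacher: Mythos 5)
Your proposal is correct and is essentially the paper's own proof: your skew-symmetry $\langle B(U,V),W\rangle=-\langle V,B(U,W)\rangle$ is exactly the paper's use of the vanishing derivative $U^{R}\langle V^{R},W^{R}\rangle=0$ for right-invariant fields, and your three summands $-\langle B(Y,Y),B(X,X)\rangle$, $\langle B(X,Y),B(Y,X)\rangle$ and $\langle B([X,Y],Y),X\rangle$ are precisely the paper's three curvature terms, evaluated by the same expansions (the last via the same adjoint-relation bookkeeping yielding $-\tfrac{1}{2}||[X,Y]||^{2}-\tfrac{1}{2}\langle[X,Y],A^{-}(X,Y)\rangle$). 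Packaging the right-invariant connection as the bilinear map $B$ on ${\mathfrak g}$ is only a notational difference from the paper's procedure of pairing with $X^{R}$ first.
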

\begin{proof}
	By the property of the Levi-Civita connection and the right-invariance,
	we have
	\begin{eqnarray*}
		4\langle\nabla_{X^{R}}\nabla_{Y^{R}}Y^{R},X^{R}\rangle
		&=&
		4X^{R}
		\langle \nabla_{Y^{R}}Y^{R},X^{R}\rangle
		-4
		\langle \nabla_{Y^{R}}Y^{R},\nabla_{X^{R}}X^{R}\rangle
		\\
		&=&
		-4\langle [Y,Y]^{*R},[X,X]^{*R}\rangle.
	\end{eqnarray*}
	Similarly,
	\begin{eqnarray*}
		-4\langle\nabla_{Y^{R}}\nabla_{X^{R}}Y^{R},X^{R}\rangle
		&=&
		-4Y^{R}
		\langle \nabla_{X^{R}}Y^{R},X^{R}\rangle
		+4
		\langle \nabla_{X^{R}}Y^{R},\nabla_{Y^{R}}X^{R}\rangle
		\\
		&=&
		\langle(-[X,Y]+A^{+}(X,Y))^{R} ,
		(-[Y,X]+A^{+}(X,Y))^{R}\rangle
		\\
		&=&
		||A^{+}(X,Y)||^{2} - ||[X,Y]||^{2}.
	\end{eqnarray*}
	Moreover,
	\begin{eqnarray*}
		-4\langle \nabla_{[X^{R},Y^{R}]} Y^{R},X^{R}\rangle
		&=&
		4\langle \nabla_{[X,Y]^{R}} Y^{R},X^{R}\rangle
		\\
		&=&
		2\langle
		(-[[X,Y],Y]+[[X,Y],Y]^{*}+[Y,[X,Y]]^{*},X\rangle
		\\
		&=& 
		2\langle 
		[X,Y],[Y,X]^{*}
		\rangle
		+2\langle 
		Y, [[X,Y],X]
		\rangle
		+2\langle
		[X,Y],[Y,X]\rangle
		\\
		&=&
		2\langle [X,Y], [Y,X]^{*}-[X,Y]^{*}\rangle
		-2||[X,Y]||^{2}
		\\
		&=&
		-2\langle [X,Y],A^{-}(X,Y)\rangle
		-2||[X,Y]||^{2}.		
	\end{eqnarray*}
	This completes the proof.
\end{proof}

\subsubsection{Definition of Misio{\l}ek curvature}

Let $G$ be a (possibly infinite-dimensional) Lie group with a right-invariant metric $\langle,\rangle$,
${\mathfrak g}$ its Lie algebra.
Define
\begin{eqnarray*}
	\langle [X,Y],Z\rangle
	&=&
	\langle Y,[X,Z]^{*}\rangle,
	\\
	R(X^{R},Y^{R})
	&=&
	\nabla_{X^{R}}\nabla_{Y^{R}}
	-
	\nabla_{Y^{R}}\nabla_{X^{R}}
	-
	\nabla_{[X^{R},Y^{R}]},
	\\
	A^{\pm} (X,Y)
	&=&
	[X,Y]^{*} \pm [Y,X]^{*}.
\end{eqnarray*}
Then we have
$$
2\nabla_{X^{R}}Y^{R}=-[X,Y]^{R}+A^{+}(X,Y)^{R}.
$$
Note that $[X,X]^{*}=0$ implies that $X$ is a stationary solution of the Euler-Arnol'd equation of $G$.

\begin{Lemma}
\label{Misiolek-calculation}
	Let $X\in{\mathfrak g}$ satisfying $[X,X]^{*}=0$
	and $\eta$ a geodesic corresponding to $X\in T_{e}G$.
	For $Y\in{\mathfrak g}$ and $f\in C^{\infty}(G)$ with $f(e)=f(t_{0})=0$ for some $t_{0}>0$,
	define a vector field $\widetilde{Y}$ along $\eta$ by
	$
	\widetilde{Y}(\eta(t)) =f(t)\cdot Y^{R}(\eta(t)).
	$
	Then the second variation $E''$ of the energy function of $\eta$ 
	is
	\begin{eqnarray*}
	E''(\eta)(\widetilde{Y},\widetilde{Y})
	=
	\int_{0}^{t_{0}}
	\left(
	\dot{f}^{2}||Y||^{2}
	-
	f^{2}
	\left(
	\langle R(X^{R},Y^{R})Y^{R},X^{R}\rangle
	-
	||\nabla_{X^{R}}Y^{R}||^{2}
	\right)
	\right)
	dt
	\end{eqnarray*}
	where $\dot{f}:=X^{R}f$.
\end{Lemma}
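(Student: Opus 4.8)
The plan is to compute the second variation $E''(\eta)(\widetilde Y,\widetilde Y)$ directly from the general second variation formula for the energy functional along a geodesic, which reads
\begin{eqnarray*}
E''(\eta)(\widetilde Y,\widetilde Y)
=
\int_{0}^{t_{0}}
\left(
\|\nabla_{\dot\eta}\widetilde Y\|^{2}
-
\langle R(\widetilde Y,\dot\eta)\dot\eta,\widetilde Y\rangle
\right)
dt,
\end{eqnarray*}
valid because $\eta$ is a geodesic, so the first-order term involving $\nabla_{\dot\eta}\dot\eta$ drops out and $f(e)=f(t_{0})=0$ kills the boundary contributions. The strategy is then to substitute $\widetilde Y(\eta(t))=f(t)Y^{R}(\eta(t))$ and $\dot\eta=X^{R}$ (using that $X$ generates the geodesic $\eta$, so $r_{\eta^{-1}}\dot\eta = X$ and hence $\dot\eta=X^{R}\circ\eta$ along the curve), and expand the covariant derivative by the Leibniz rule.

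First I would handle the $\|\nabla_{\dot\eta}\widetilde Y\|^{2}$ term. Since $\widetilde Y = f\,Y^{R}$ along $\eta$ with $f$ a function of $t$, the metric connection gives
\begin{eqnarray*}
\nabla_{\dot\eta}\widetilde Y
=
\dot f\,Y^{R}
+
f\,\nabla_{X^{R}}Y^{R},
\end{eqnarray*}
where $\dot f = X^{R}f$ is exactly the notation fixed in the statement. I would then expand the norm squared into three pieces: the $\dot f^{2}\|Y^{R}\|^{2}$ term, a cross term $2 f\dot f\langle Y^{R},\nabla_{X^{R}}Y^{R}\rangle$, and the $f^{2}\|\nabla_{X^{R}}Y^{R}\|^{2}$ term. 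By right-invariance $\|Y^{R}\|^{2}=\|Y\|^{2}$ is constant, so the $\dot f^{2}\|Y\|^{2}$ term matches the desired answer and the $f^{2}\|\nabla_{X^{R}}Y^{R}\|^{2}$ term matches the $+\|\nabla_{X^{R}}Y^{R}\|^{2}$ appearing inside the parentheses. The cross term should integrate to zero: since $\langle Y^{R},Y^{R}\rangle$ is constant, $\langle Y^{R},\nabla_{X^{R}}Y^{R}\rangle=\tfrac12 X^{R}\langle Y^{R},Y^{R}\rangle=0$, so the $2f\dot f\langle Y^{R},\nabla_{X^{R}}Y^{R}\rangle$ integrand vanishes identically. (Alternatively one integrates by parts and uses $f(0)=f(t_{0})=0$.)

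Next I would treat the curvature term. Using the antisymmetries of $R$ one has $\langle R(\widetilde Y,\dot\eta)\dot\eta,\widetilde Y\rangle = f^{2}\langle R(Y^{R},X^{R})X^{R},Y^{R}\rangle = f^{2}\langle R(X^{R},Y^{R})Y^{R},X^{R}\rangle$, where the last equality is the standard pair symmetry $\langle R(A,B)B,A\rangle=\langle R(B,A)A,B\rangle$ of the Riemann tensor. This produces exactly the $-f^{2}\langle R(X^{R},Y^{R})Y^{R},X^{R}\rangle$ term, so collecting everything yields the claimed formula. The main obstacle, and the only genuinely delicate point, is justifying the second-variation formula and the Leibniz expansion of $\nabla_{\dot\eta}\widetilde Y$ rigorously in the infinite-dimensional (weak Riemannian) setting of the diffeomorphism group, where the Levi-Civita connection and curvature exist only after the projection $P$ and the metric is only weak; I would address this by working with the right-invariant connection constructed earlier in the excerpt and appealing to the regularity hypotheses already in force, reducing all computations to algebraic identities in $\mathfrak g$ via right-invariance.
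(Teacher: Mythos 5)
Your proposal is correct and takes essentially the same route as the paper's proof: both invoke the general second variation formula along the geodesic, expand $\nabla_{X^{R}}\widetilde{Y}=\dot{f}\,Y^{R}+f\,\nabla_{X^{R}}Y^{R}$ by the Leibniz rule, kill the cross term using right-invariance of $\|Y^{R}\|^{2}$ together with metric compatibility, and pull $f^{2}$ out of the curvature term. The only cosmetic difference is that you start from the convention $\langle R(\widetilde{Y},\dot{\eta})\dot{\eta},\widetilde{Y}\rangle$ and pass through the pair symmetry of $R$, whereas the paper writes $\langle R(X^{R},\widetilde{Y})\widetilde{Y},X^{R}\rangle$ directly.
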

\begin{proof}
	Note that $\dot{\eta}=X^{R}$ by the assumption.
	The general formula for the second variation of the energy function implies
	$$
	E''(\eta)(\widetilde{Y},\widetilde{Y})
	=
	\int_{0}^{t_{0}}
	||
	\nabla_{X^{R}}\widetilde{Y}||^{2}dt
	-
	\int_{0}^{t_{0}}
	\langle
	R(X^{R},\widetilde{Y})\widetilde{Y},X^{R}
	\rangle
	dt.
	$$
	For the first term,
	we have
	\begin{eqnarray*}
		\nabla_{X^{R}}\widetilde{Y}
		&=&
		\dot{f} Y^{R} + f\nabla_{X^{R}}Y^{R}
	\end{eqnarray*}
	Thus,
	\begin{eqnarray*}
		||\nabla_{X^{R}}\widetilde{Y}||^{2}
		=
		\dot{f}^{2}||Y||+2f\dot{f}\langle Y,\nabla_{X^{R}}Y^{R}\rangle +f^{2}||\nabla_{X^{R}}Y^{R}||^{2}.
	\end{eqnarray*}
	Then, $\langle Y^{R},\nabla_{X^{R}}Y^{R}\rangle=-\langle \nabla_{X^{R}}Y^{R},Y^{R}\rangle$
	implies
	\begin{eqnarray*}
		||\nabla_{X^{R}}\widetilde{Y}||^{2}
		&=&
		\dot{f}^{2}||Y||^{2}
		+f^{2}||\nabla_{X^{R}}Y^{R}||^{2}
	\end{eqnarray*}
	On the other hand, 
	$$
	\langle R(X^{R},\widetilde{Y})\widetilde{Y},X^{R}\rangle
	=
	f^{2}
	\langle R(X^{R},Y^{R})Y^{R},X^{R}\rangle.
	$$
	This completes the proof.
\end{proof}

\begin{Definition}
\label{Appx-MC-def}
	Let $X\in{\mathfrak g}$ satisfying $[X,X]^{*}=0$.
	Define the Misio{\l}ek curvature $MC_{X,Y}:=MC^{G}_{X,Y}$ by
	$$
	MC_{X,Y} = 
	\langle R(X^{R},Y^{R})Y^{R},X^{R}\rangle
	-
	||\nabla_{X^{R}}Y^{R}||^{2}
	$$
	for $Y\in{\mathfrak g}$.
\end{Definition}

\begin{Theorem}
	Let $X\in{\mathfrak g}$ satisfying $[X,X]^{*}=0$
	and $\eta$ a geodesic corresponding to $X\in T_{e}G$.
	Suppose $Y\in{\mathfrak g}$ satisfies $MC_{X,Y}>0$.
	For $s>0$, define
	\begin{eqnarray*}
	t_{s}&:=&\pi ||Y||\sqrt{ \frac{s}{MC_{X,Y}}} \qquad\in {\mathbb R}_{>0}
	\\
	f_{s}(t)&:=&
	\sin\left(
	\frac{t}{||Y||}
	\sqrt{
	\frac{MC_{X,Y}}{s}
	}
	\right)
	\qquad \in C^{\infty}({\mathbb R}_{\geq 0})
	\end{eqnarray*}
	and a vector field $\widetilde{Y}$ along $\eta$
	by $\widetilde{Y}(\eta(t))=f_{s}(t)Y^{R}(\eta(t))$.
	Then, the second variation $E''$ of the energy function of $\eta$ 
	is
	\begin{eqnarray*}
	E''(\eta)(\widetilde{Y},\widetilde{Y})
	=
	\frac{\pi}{2}(1-s)||Y||\sqrt{\frac{MC_{X,Y}}{s}}.
	\end{eqnarray*}
	In particular, 
	$E''(\eta)(\widetilde{Y},\widetilde{Y})<0$ if $0<s<1$.
\end{Theorem}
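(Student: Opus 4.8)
The plan is to substitute the prescribed test function directly into the second-variation formula of Lemma \ref{Misiolek-calculation} and reduce the statement to an elementary trigonometric integral. First I would abbreviate $M:=MC_{X,Y}>0$ and $L:=\|Y\|$, and introduce the frequency $\omega:=\frac{1}{L}\sqrt{M/s}$, so that $f_{s}(t)=\sin(\omega t)$ and $t_{s}=\pi/\omega$. Since $\eta$ is the geodesic with $\dot{\eta}=X^{R}$, the operator $\dot{f}:=X^{R}f$ occurring in Lemma \ref{Misiolek-calculation} is just differentiation in $t$ along $\eta$, whence $\dot{f}_{s}(t)=\omega\cos(\omega t)$. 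I would then check the hypotheses of the lemma: the boundary conditions $f_{s}(0)=0$ and $f_{s}(t_{s})=\sin(\pi)=0$ hold, so Lemma \ref{Misiolek-calculation} applies with $t_{0}=t_{s}$.

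Next I would invoke Definition \ref{Appx-MC-def} to identify the bracketed quantity in the integrand of Lemma \ref{Misiolek-calculation} as exactly $MC_{X,Y}=M$. Using $\omega^{2}L^{2}=M/s$ this gives
\begin{eqnarray*}
E''(\eta)(\widetilde{Y},\widetilde{Y})
&=&
\int_{0}^{t_{s}}
\left(
\dot{f}_{s}^{2}\,\|Y\|^{2}
-
f_{s}^{2}\,MC_{X,Y}
\right)dt
\\
&=&
\int_{0}^{t_{s}}
\left(
\frac{MC_{X,Y}}{s}\cos^{2}(\omega t)
-
MC_{X,Y}\sin^{2}(\omega t)
\right)dt.
\end{eqnarray*}
The one remaining computation is the half-period identity $\int_{0}^{\pi/\omega}\cos^{2}(\omega t)\,dt=\int_{0}^{\pi/\omega}\sin^{2}(\omega t)\,dt=\pi/(2\omega)$. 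Substituting this and simplifying with $\tfrac{1}{\omega}=L\sqrt{s/M}$ yields
\begin{eqnarray*}
E''(\eta)(\widetilde{Y},\widetilde{Y})
&=&
\frac{\pi}{2\omega}
\left(
\frac{MC_{X,Y}}{s}
-
MC_{X,Y}
\right)
\\
&=&
\frac{\pi}{2}(1-s)\,\|Y\|\sqrt{\frac{MC_{X,Y}}{s}},
\end{eqnarray*}
which is the asserted formula. The final clause is then immediate: for $0<s<1$ the factors $\pi/2$, $\|Y\|$, and $\sqrt{MC_{X,Y}/s}$ are all positive while $(1-s)>0$, so $E''(\eta)(\widetilde{Y},\widetilde{Y})<0$.

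I do not expect a genuine obstacle here, since the conclusion is a direct calculation once the test function $f_{s}$ and terminal time $t_{s}$ are fixed. The only points demanding care are the verification that $f_{s}$ meets the vanishing boundary conditions required to invoke Lemma \ref{Misiolek-calculation}, and the bookkeeping of the square roots in the last simplification. Conceptually, the content resides entirely in the \emph{choice} of $t_{s}$ and $f_{s}$: they are engineered so that the positive kinetic term $\dot{f}_{s}^{2}\|Y\|^{2}$ and the negative curvature term $f_{s}^{2}MC_{X,Y}$ integrate to equal-weight multiples of $MC_{X,Y}$, collapsing the whole expression to a clean multiple of $(1-s)$.
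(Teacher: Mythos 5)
Your proposal is correct and follows essentially the same route as the paper: both invoke Lemma \ref{Misiolek-calculation}, identify the bracketed curvature quantity with $MC_{X,Y}$ via Definition \ref{Appx-MC-def}, substitute $f_{s}$, and evaluate the resulting trigonometric integral (the paper via the change of variables $x=\tfrac{t}{||Y||}\sqrt{MC_{X,Y}/s}$, you via the equivalent half-period identity). Your explicit verification of the boundary conditions $f_{s}(0)=f_{s}(t_{s})=0$ is a point the paper leaves implicit, but there is no substantive difference.
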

\begin{proof}
For simplicity, set $M:=MC_{X,Y}$.
	By Lemma \ref{Misiolek-calculation},
	we have
	\begin{eqnarray*}
		&&E''(\eta)(\widetilde{Y},\widetilde{Y})
		\\
		=&&\int_{0}^{t_{s}}\left(
		\dot{f}_{s}^{2}||Y||^{2}
		-f_{s}^{2}M\right)dt
		\\
		=&&
		\int_{0}^{t_{s}}
		\left(
		\frac{M}{s}
		\cos^{2}\left(\frac{t}{||Y||}
	\sqrt{
	\frac{M}{s}}\right)
	-
	M
	\sin^{2}\left(
	\frac{t}{||Y||}
	\sqrt{
	\frac{M}{s}
	}
	\right)
	\right)dt
	\\
	=&&
	M\int_{0}^{\pi}
	\left(
	\frac{1}{s}\cos^{2}(x)
	-
	\sin^{2}(x)
	\right)||Y||\sqrt{\frac{s}{M}}dx
	\\
	=&&
	\frac{\pi}{2}(1-s)||Y||\sqrt{\frac{M}{s}}.
	\end{eqnarray*}
This completes the proof.
\end{proof}

\begin{Lemma}
\label{MC-other}
	Let $X\in{\mathfrak g}$ satisfying $[X,X]^{*}=0$
	and $Y\in{\mathfrak g}$.
	Then,
	we have
	$$
	MC_{X,Y}
	=
	-
	||[X,Y]||^{2}
	-
	\langle
	[[X,Y],Y],X\rangle.
	$$
\end{Lemma}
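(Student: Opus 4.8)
The plan is to start from the definition of the Misio{\l}ek curvature given in Definition~\ref{Appx-MC-def}, namely $MC_{X,Y} = \langle R(X^{R},Y^{R})Y^{R},X^{R}\rangle - ||\nabla_{X^{R}}Y^{R}||^{2}$, and to reduce both terms to expressions involving only $[X,Y]$ and the adjoint bracket $[\cdot,\cdot]^{*}$. First I would expand the connection term using the identity $2\nabla_{X^{R}}Y^{R} = (-[X,Y]+A^{+}(X,Y))^{R}$ recorded just before Definition~\ref{Appx-MC-def}, which upon squaring gives $4||\nabla_{X^{R}}Y^{R}||^{2} = ||[X,Y]||^{2} - 2\langle [X,Y],A^{+}(X,Y)\rangle + ||A^{+}(X,Y)||^{2}$.

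For the curvature term I would invoke Lemma~\ref{curvature-R-inv}. The key simplification is the standing hypothesis $[X,X]^{*}=0$, which annihilates the cross term $-4\langle [Y,Y]^{*},[X,X]^{*}\rangle$, leaving $4\langle R(X^{R},Y^{R})Y^{R},X^{R}\rangle = ||A^{+}(X,Y)||^{2} - 3||[X,Y]||^{2} - 2\langle [X,Y],A^{-}(X,Y)\rangle$. Subtracting four times the connection norm from this, the two copies of $||A^{+}(X,Y)||^{2}$ cancel, and collecting the remaining terms yields $4\,MC_{X,Y} = -4||[X,Y]||^{2} + 2\langle [X,Y],A^{+}(X,Y)-A^{-}(X,Y)\rangle$. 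Since $A^{+}(X,Y)-A^{-}(X,Y) = 2[Y,X]^{*}$ directly from the definitions, this becomes $MC_{X,Y} = -||[X,Y]||^{2} + \langle [X,Y],[Y,X]^{*}\rangle$.

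The remaining step, which I expect to be the only genuinely delicate one because of sign bookkeeping, is to rewrite $\langle [X,Y],[Y,X]^{*}\rangle$ in the claimed form. Applying the defining relation $\langle [A,B],C\rangle = \langle B,[A,C]^{*}\rangle$ with $A=Y$, $B=[X,Y]$, $C=X$ gives $\langle [Y,[X,Y]],X\rangle = \langle [X,Y],[Y,X]^{*}\rangle$. Finally, antisymmetry of the Lie bracket turns $[Y,[X,Y]]$ into $-[[X,Y],Y]$, so that $\langle [X,Y],[Y,X]^{*}\rangle = -\langle [[X,Y],Y],X\rangle$, and substituting this back produces exactly $MC_{X,Y} = -||[X,Y]||^{2} - \langle [[X,Y],Y],X\rangle$. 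The care needed here is to match the arguments of the defining relation correctly and to track the single sign coming from the bracket antisymmetry.
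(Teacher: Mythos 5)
Your proof is correct and takes essentially the same route as the paper: both combine Lemma~\ref{curvature-R-inv} (with the hypothesis $[X,X]^{*}=0$ killing the term $-4\langle[Y,Y]^{*},[X,X]^{*}\rangle$) with the expansion $2\nabla_{X^{R}}Y^{R}=(-[X,Y]+A^{+}(X,Y))^{R}$, cancel the $||A^{+}(X,Y)||^{2}$ contributions, and reduce the result to $\langle [X,Y],[Y,X]^{*}\rangle$. The only difference is one of detail: you spell out the final conversion $\langle [X,Y],[Y,X]^{*}\rangle=-\langle[[X,Y],Y],X\rangle$ via the defining relation for $[\cdot,\cdot]^{*}$ and bracket antisymmetry, a step the paper's proof leaves implicit.
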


\begin{proof}
By Lemma \ref{curvature-R-inv},
we have
\begin{eqnarray*}
	4\langle R(X^{R},Y^{R})Y^{R},X^{R}\rangle 
	&=&
		||A^{+}(X,Y)||^{2}
		-3||[X,Y]||^{2}
		-2\langle [X,Y],A^{-}(X,Y)\rangle.
\end{eqnarray*}	
On the other hand,
\begin{eqnarray*}
	-4||\nabla_{X^{R}}Y^{R}||^{2}
	&=&
	-
	||
	-[X,Y]+A^{+}(X,Y)
	||
	\\
	&=&
	-||[X,Y]||^{2}
	+2
	\langle [X,Y], A^{+}(X,Y)\rangle
	-
	||A^{+}(X,Y)||^{2}.
\end{eqnarray*}
This completes the proof.
\end{proof}

\subsubsection{Misio{\l}ek curvature of ${\mathcal D}_{\mu}(M)$}
Let ${\mathcal D}_{\mu}(M)$ be the group of volume-preserving $C^{\infty}$-diffeomorphisms of a compact $n$-dimensional manifold $M$ 
with the $L^{2}$ right-invariant metric:
\begin{eqnarray*}
	\langle X,Y\rangle :=
	\int_{M}g(X,Y)\mu.
\end{eqnarray*}
Here $X,Y\in{\mathfrak g}=T_{e}G$,
which is identified with the space of divergence-free vector fields.
\begin{Lemma}
	Let $X\in{\mathfrak g}$ satisfying $[X,X]^{*}=0$ and $Y\in{\mathfrak g}$.
	Then,
	we have
	$$
	MC_{X,Y}=\langle \nabla^{M}_{X}[X,Y]+\nabla^{M}_{[X,Y]}X,Y\rangle,
	$$
	where $\nabla^{M}$ is the Levi-Civita connection on $M$.
\end{Lemma}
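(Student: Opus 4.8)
The plan is to start from the closed form of the Misio\l ek curvature already obtained in Lemma \ref{MC-other},
\[
MC_{X,Y}=-\|[X,Y]\|^{2}-\langle[[X,Y],Y],X\rangle,
\]
and to convert every bracket and inner product into expressions involving the Levi-Civita connection $\nabla^{M}$ on $M$. Two elementary tools drive the computation. First, the bracket on ${\mathfrak g}$ is the ordinary commutator of vector fields (cf. \eqref{sign-bracket} together with the connection formula $\nabla_{X^{R}}Y^{R}=-(P(\nabla^{M}_{X}Y))^{R}$, and the bracket computed explicitly in Lemma \ref{omegaZ}), so torsion-freeness of $\nabla^{M}$ gives $[A,B]=\nabla^{M}_{A}B-\nabla^{M}_{B}A$ for all $A,B\in{\mathfrak g}$. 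Second, for a divergence-free field $A$ the operator $\nabla^{M}_{A}$ is skew-symmetric for the $L^{2}$ inner product: since $\int_{M}A(f)\mu=0$ for divergence-free $A$ (the identity already used in the cocycle lemma), differentiating $g(B,C)$ along $A$ and integrating yields $\langle\nabla^{M}_{A}B,C\rangle=-\langle B,\nabla^{M}_{A}C\rangle$. I would also note that $[X,Y]\in{\mathfrak g}$, so both identities apply with $A=[X,Y]$.

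First I would rewrite the cubic term. Writing $W:=[X,Y]$ and using torsion-freeness, $[[X,Y],Y]=\nabla^{M}_{W}Y-\nabla^{M}_{Y}W$, so that, applying skew-symmetry with the divergence-free fields $W$ and $Y$ respectively,
\[
\langle[[X,Y],Y],X\rangle=\langle\nabla^{M}_{W}Y,X\rangle-\langle\nabla^{M}_{Y}W,X\rangle=-\langle Y,\nabla^{M}_{W}X\rangle+\langle W,\nabla^{M}_{Y}X\rangle.
\]
Hence
\[
-\langle[[X,Y],Y],X\rangle=\langle\nabla^{M}_{[X,Y]}X,Y\rangle-\langle[X,Y],\nabla^{M}_{Y}X\rangle,
\]
which already produces the term $\langle\nabla^{M}_{[X,Y]}X,Y\rangle$ appearing in the claim.

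It then remains to match the leftover contribution $-\|[X,Y]\|^{2}-\langle[X,Y],\nabla^{M}_{Y}X\rangle$ with $\langle\nabla^{M}_{X}[X,Y],Y\rangle$. Applying skew-symmetry with the divergence-free field $X$ gives $\langle\nabla^{M}_{X}[X,Y],Y\rangle=-\langle[X,Y],\nabla^{M}_{X}Y\rangle$, so the required equality reduces to
\[
-\|[X,Y]\|^{2}=\langle[X,Y],\nabla^{M}_{Y}X-\nabla^{M}_{X}Y\rangle.
\]
By torsion-freeness $\nabla^{M}_{Y}X-\nabla^{M}_{X}Y=-[X,Y]$, so the right-hand side is indeed $-\|[X,Y]\|^{2}$, and the quadratic terms cancel. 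Collecting the two blocks yields the asserted formula. The only genuinely delicate point is the bookkeeping of signs: one must fix, once and for all, that the bracket on ${\mathfrak g}$ is the commutator of vector fields (not its negative) and that $\nabla^{M}_{A}$ is skew—rather than self—adjoint on divergence-free fields. With these conventions pinned down the manipulation is purely formal; the stationarity hypothesis $[X,X]^{*}=0$ is used only through Lemma \ref{MC-other} and plays no further role in the conversion.
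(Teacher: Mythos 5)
Your proof is correct and is essentially the paper's own argument run in reverse: the paper applies the Koszul formula integrated over $M$ (which is precisely your two tools --- torsion-freeness and the $L^{2}$-skew-adjointness of $\nabla^{M}_{A}$ for divergence-free $A$ --- packaged together) to the terms $\langle\nabla^{M}_{X}[X,Y],Y\rangle$ and $\langle\nabla^{M}_{[X,Y]}X,Y\rangle$ and then invokes Lemma \ref{MC-other}, whereas you start from Lemma \ref{MC-other} and convert the brackets into covariant derivatives. The ingredients, the reliance on Lemma \ref{MC-other}, and the role of the hypothesis $[X,X]^{*}=0$ are identical, so the difference is purely organizational.
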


\begin{proof}
	By the Koszul formula,
	we have
	\begin{eqnarray*}
		2\langle \nabla^{M}_{X}[X,Y], Y\rangle
		&=&
		\langle [X,[X,Y]], Y\rangle
		-
		\langle [X,Y], [X,Y]\rangle
		-
		\langle [[X,Y],Y],X\rangle,
		\\
		2\langle \nabla^{M}_{[X,Y]}X, Y\rangle
		&=&
		\langle [[X,Y],X], Y\rangle
		-
		\langle [[X,Y],Y],X\rangle
		-
		\langle [X,Y],[X,Y]\rangle.
	\end{eqnarray*}
	Thus Lemma \ref{MC-other} implies the lemma.
\end{proof}

\end{document}